\documentclass[10pt]{article}

\usepackage{amsmath,amsfonts,amssymb,amsthm}
\usepackage{enumerate}
\usepackage[hyphens]{url}
\usepackage{tcolorbox}
\usepackage{hyperref}
\usepackage{xspace}
\usepackage[margin=3.5cm]{geometry}
\usepackage{marvosym}
\usepackage{thmtools}

\theoremstyle{plain}
\newtheorem{theorem}{Theorem}
\newtheorem{lemma}[theorem]{Lemma}
\newtheorem{corollary}[theorem]{Corollary}
\newtheorem{proposition}[theorem]{Proposition}

\theoremstyle{definition}

\newcommand\N{\mathbb{N}}

\newcommand\R{\mathbb{R}}

\newcommand{\dd}[2]{\frac{\mathrm{d} #1}{\mathrm{d} #2}}

\DeclareMathOperator{\im}{im} 
\DeclareMathOperator{\rank}{rank}
\DeclareMathOperator{\lin}{span}
\DeclareMathOperator{\aff}{aff}
\DeclareMathOperator{\codim}{codim} 
\DeclareMathOperator{\int_}{int}
\DeclareMathOperator{\relint}{relint}
\DeclareMathOperator{\crit}{Crit}

\DeclareMathOperator{\zero}{zero}

\DeclareMathOperator{\e}{e} 
\DeclareMathOperator{\diag}{diag}
\DeclareMathOperator{\id}{Id}

\newcommand{\ones}{1}
\newcommand{\zeros}{0}
\newcommand{\trans}{\mathsf{T}}

\newcommand{\cayley}{J_C}
\newcommand{\IF}{I}
\newcommand{\JF}{J}
\newcommand{\cons}{l}

\newcommand{\ff}{\Phi}
\newcommand{\vv}{\mathcal{V}}
\newcommand{\bb}{\mathcal{B}}

\definecolor{darkgreen}{rgb}{0.0,0.7,0.0}

\newcommand\gray[1]{{\textcolor{gray}{#1}}}

\newcommand\blfootnote[1]{%
  \begin{NoHyper}
  \renewcommand\thefootnote{}\footnote{#1}%
  \addtocounter{footnote}{-1}%
  \end{NoHyper}
}

\usepackage{algorithm, algpseudocode}

\makeatletter

\begin{document}

\title{Generic finiteness of solutions to parametrized systems \\ of generalized polynomial inequalities}

\author{
Stefan M\"uller\textsuperscript{\Letter}
}

\maketitle


\begin{abstract}
We study parametrized systems of {\em generalized} polynomial inequalities, 
$(c \circ x^B) \in C$,
for a positive variable vector $x \in \R^n_>$, 
a {\em real} exponent matrix $B \in \R^{n \times m}$, 
a positive parameter vector $c \in \R^m_>$, 
and a positive, blunt coefficient cone $C \subset \R^m_>$.
This setting includes (i)~generalized polynomial equations, $A \, (c \circ x^B) = 0$,
arising e.g.\ from power-law dynamical systems with real exponents
(and involving a real coefficient matrix $A \in \R^{n' \times m}$), 
and (ii)~generalized semi-algebraic sets.

Our main result guarantees the generic finiteness of solutions modulo unavoidable exponential fibers.
Building on a framework for parametrized {\em polynomial} systems developed by M\"uller and Regensburger, 
we reformulate the problem as a system of {\em binomial} equations on the coefficient set $P = C \cap \Delta \subseteq \R^m_>$,
where $\Delta$ is a product of (standard) simplices.
Solvability is governed by the monomial-dependency subspace $D$.

Using elementary arguments from analysis and topology,
including a logarithmic boundary analysis and Sard's theorem,
we control the accumulation of solutions on the boundary and in the interior of $P$.
We show that, if $\dim P \le \dim D$,
then generically in both the measure-theoretic and topological sense,
there are finitely many solutions on $P$
(and hence finitely many exponential fibers of solutions in $\R^n_>$).

As an application, we prove generic finiteness of steady states (within forward-invariant sets)
for parametrized power-law dynamical systems with real exponents.


\vspace{2ex}
\noindent
{\bf Keywords.} 
generalized polynomial systems,
semi-algebraic sets,
boundary analysis,
differential topology,
power-law dynamical systems

\vspace{2ex}
\noindent
{\bf MSC classes.} 
14P05,
14P10,
58C25,
57R45,
37N25,
92C42
\end{abstract}

\blfootnote{
\scriptsize

\noindent
{\bf Stefan~M\"uller} \\
Faculty of Mathematics, University of Vienna, Oskar-Morgenstern-Platz 1, 1090 Wien, Austria \\
\Letter \, st.mueller@univie.ac.at 
}



\section{Introduction}

Parametrized power-law dynamical systems (with real, not necessarily integer exponents)
are used to model a variety of chemical, biological (e.g.\ ecological or epidemiological), economic, and engineering processes.
For example, they arise from (chemical) reaction networks with (generalized) mass-action kinetics;
see e.g.~\cite{Horn1972,HornJackson1972,Feinberg1972} and \cite{MuellerRegensburger2012,MuellerRegensburger2014}.
The analysis of their steady states leads to parametrized systems of generalized polynomial equations and inequalities,
given by finite sums of monomial terms 
(where a monomial term is a monomial times a positive parameter).

Solution sets of (generalized) polynomial systems may consist of infinite exponential fibers.
Hence, generic finiteness questions are both natural and subtle:
one seeks conditions ensuring that, for generic parameter values, only finitely many solution fibers occur.
By ``generic'' we mean that finiteness holds almost everywhere and is robust;
that is, the ``good'' parameter set is open and dense, and the ``bad'' set has Lebesgue measure zero.

In this work,
we consider parametrized systems of generalized polynomial inequalities~\cite{MuellerRegensburger2023a} of the form
\[
(c\circ x^B)\in C,
\]
where $x \in \R^n_>$ is a positive variable vector, $c \in \R^m_>$ is a positive parameter vector, 
$B \in \R^{n \times m}$ is a real exponent matrix, and $C \subseteq \R^m_>$ is a positive, blunt coefficient cone. 
This formulation includes (i) generalized polynomial equations, $A \, (c \circ x^B) = 0$, where $A \in \R^{n' \times m}$ is a real coefficient matrix,
and (ii) ``parametrized generalized semi-algebraic sets'', if $C$ is given by linear equations and inequalities;
see Section~\ref{sec:semi} for a formal definition.
In particular, this formulation includes steady-state problems for parametrized power-law dynamical systems
with {\em real} exponents.

The main purpose of this work is twofold. 
First, we extend generic finiteness results for parametrized polynomial equations to {\em generalized} polynomial equations (with real exponents) 
and to polynomial {\em inequalities}. 
Second, we give a proof based only on elementary tools from linear algebra, polyhedral geometry, analysis, and topology.
In particular, our argument avoids the use of algebraic geometry 
and is therefore applicable without the assumption of integer exponents.

Building on the framework developed by M\"uller and Regensburger~\cite{MuellerRegensburger2023a},
we pass from the original variable space $\R^n_>$ to a coefficient
set
\[
P=C\cap\Delta \subseteq \R^m_> ,
\]
where $\Delta$ is a product of (standard) simplices determined by a decomposition of the coefficient cone $C$. 
The relevant exponent data are encoded by the monomial-dependency subspace
\[
D=\ker B\cap\ker\cayley ,
\]
where $\cayley$ is a corresponding Cayley matrix.
Solution sets on $P$ are fibers of a logarithmic-linear map from $P$ to a space of dimension $\dim D$.
The principal difficulty is that the logarithm may diverge along the boundary of the positive orthant. 
We control this phenomenon by an elementary logarithmic-polytope argument, 
which identifies a lower-dimensional set of boundary parameter values. 
Properness away from these values, combined with Sard's theorem~\cite{lee2013,guilleminpollack1974}, guarantees generic finiteness under the criterion
\[
\dim P\le\dim D.
\]
This dimension condition also has the expected interpretation. 
If $\dim P<\dim D$, then solution sets are generically empty. 
If $\dim P=\dim D$, then solution sets on $P$ are generically finite (and solution sets on $\R^n_>$ have finitely many exponential fibers). 
Thus our main result simultaneously detects generic nonexistence and finiteness of solutions.
Our result also applies to restricted parameter sets with variable and fixed parameters,
provided that the corresponding logarithmic-linear lumping map has full rank.

For generalized polynomial {\em inequalities}, we consider two complementary equation systems. 
The first retains all monomials and replaces the coefficient cone by its linear-span relaxation.
The second discards phantom monomials (not appearing in the equation part).
The resulting sufficient generic-finiteness criteria need not imply one another,
and both can be checked directly from the coefficient cone and exponent data.

For classical (Laurent) polynomial systems, 
generic finiteness and root counts are studied by Bernstein--Kushnirenko theory, 
which relates the number of isolated solutions in the complex torus to the Newton polytopes of the equations~\cite{bernshtein1975,kushnirenko1976}.
In that framework, genericity is assumed with respect to the set of complex coefficients.
Our setting differs in that we fix real coefficients~$A$, but allow for positive parameters~$c$, real exponents~$B$, and inequalities. 
Fewnomial theory also concerns systems of generalized polynomial equations (with real exponents)~\cite{khovanskii1991,bihan2007gale}. 
It provides combinatorial upper bounds for the number of nondegenerate, isolated positive solutions 
in terms of the number of monomials (and variables), 
under the explicit assumption that the solution set is finite.
Finally,
in reaction network theory, injectivity criteria guarantee uniqueness of solutions.
They apply under structural sign or graph-theoretic assumptions on the network~\cite{craciunfeinberg2005i, banajicraciun2010, feliuwiuf2013, Mueller2016, BanajiPantea2016, FeliuMuellerRegensburger2019}.

The present work is most closely related to results of Feliu and collaborators on nondegenerate solutions of parametrized ``power-law'' systems (with integer exponents),
arising from reaction networks with mass-action kinetics.
For a recent generic finiteness result, see~\cite{banaji2026positive} 
(which builds on \cite{MuellerRegensburger2023a} and \cite{feliu2025generic,feliu2026generic}).
In the setting of tropical geometry, systems in which a monomial term can appear in several equations
have been called ``vertically parametrized''.
Their approach utilizes algebraic-geometric properties of polynomial systems to analyze these vertically parametrized structures.
Our approach is complementary.
We work directly with real exponents and replace algebraic-geometric notions and methods (ring, ideal, variety, Zariski topology, or Gr\"obner bases)
by linear-algebraic, polyhedral, and differential-topological arguments.
In particular, we build on the underlying geometry: the coefficient set (a polytope, in the equation case) 
and the monomial-dependency subspace.

When the coefficient cone is definable,
an alternative route to generic finiteness is available through o-minimal geometry. 
On the positive orthant, generalized monomials (with real exponents) are definable in the o-minimal structure $\R_{\exp}$~\cite{wilkie1996},
and the solution sets considered here belong to definable families~\cite{vandendriesmiller1996}.
General finiteness and stratification results for definable maps can therefore yield related genericity results~\cite{vandendries1998,coste2000}. 
We instead give an elementary proof tailored to our parametrized systems of generalized polynomial equations $A \, (c \circ x^B) = 0$
and inequalities $(c \circ x^B) \in C$. 
Besides avoiding substantially heavier machinery, 
this approach is more directly accessible in the reaction-network and power-law-systems community 
and makes the relevant boundary analysis and dimension condition transparent.


\subsubsection*{Organization of the work}

In Section~\ref{sec:equ},
we first recall the framework for parametrized systems of generalized polynomial {\em equations} developed in~\cite{MuellerRegensburger2023a}
and define the relevant logarithmic-linear map.
In the technical Sections~\ref{sec:boundary} and \ref{sec:critical},
we study boundary and critical parameters, respectively.
In Section~\ref{sec:main}, we establish the dimension condition $\dim P \le \dim D$ as a generic-finiteness criterion,
and in Section~\ref{sec:main_original}, we allow for restricted parameter sets with variable and fixed parameters.
Finally, in Section~\ref{sec:app}, we apply our results to steady states of parametrized power-law dynamical systems with real exponents.
For completeness,
we deal with the case $\dim P > \dim D$ in Section~\ref{sec:under}.

In Section~\ref{sec:inequ}, we treat parametrized systems of generalized polynomial {\em inequalities}
as equation systems by linear-span relaxation (Section~\ref{sec:relaxation}) and dimension reduction (Section~\ref{sec:reduction}).
For the convenience of the reader,
we explicitly define parametrized generalized semi-algebraic sets in Section~\ref{sec:semi},
and exhibit that they are covered by the framework developed in~\cite{MuellerRegensburger2023a}.


\subsubsection*{Notation}

We denote the positive (nonnegative) real numbers by $\R_>$ ($\R_\ge$).
Throughout the work, we use index notation: for a finite index set $I$,
we write $\R^I$ for the real vector space of vectors $x=(x_i)_{i \in I}$ with $x_i \in \R$,
and analogously we write $\R^I_>$ ($\R^I_\ge$).
For $I=\{1,\ldots,n\}$, we have the standard case $\R^I=\R^n$.
We write $x>0$ for $x \in \R^I_>$ and $x \ge 0$ for $x \in \R^I_\ge$.

For vectors $x,y \in \R^I$, we denote their componentwise (Hadamard) product by $x \circ y \in \R^I$.
For $x \in \R^I_>, \, y \in \R^I$, we define the (generalized) monomial $x^y = \prod_{i \in I} (x_i)^{y_i} \in \R_>$,
and for $x \in \R^I_>, \, Y \in \R^{I \times J}$, 
we define the vector of monomials $x^Y \in \R^J_>$ via $(x^Y)_j =x^{y(j)}$,
where $y(j)$ is the column of $Y$ with index $j \in J$.


\section{Parametrized generalized polynomial equations} \label{sec:equ}

Let $A \in \R^{n' \times m}$ be a real coefficient matrix, $B \in \R^{n \times m}$ be a {\em real} exponent matrix, 
and $c \in \R^m_>$ a positive parameter vector.
We consider the parametrized system of generalized polynomial equations
\begin{equation}
A \left( c \circ x^B \right) = \zeros 
\end{equation}
for the positive variable vector $x \in \R^n_>$.

The problem can be written in the general setting of inequalities as $(c \circ x^B) \in C$
with (positive, blunt) coefficient cone ${C = \ker A \cap \R^m_>}$,
a ``subspace cone'' in the terminology of~\cite{MuellerRegensburger2016}.
As a necessary condition for the existence of solutions, we assume $C \neq \emptyset$.
We further assume that $C = C^1 \times \cdots \times C^\ell$,
that is, the coefficient cone has a direct product decomposition, inducing $\ell$ classes of (monomial) indices.
We can always choose the trivial decomposition with $\ell=1$.
To maximally reduce the problem dimension, we can choose the finest decomposition.
The coefficient set $P = C \cap \Delta$ arises from the intersection with a product $\Delta = \Delta^{m_1-1} \times \cdots \times \Delta^{m_\ell-1}$ of (standard) simplices with $m_1+\ldots+m_\ell=m$.
It is a relatively open polytope.
We write $\overline{P}$ for the closure of $P$ (in $\R^m_\ge$) and $\partial P = \overline{P} \setminus \relint P = \overline{P} \setminus P$ for the relative boundary.

Besides the coefficient set (arising from the coefficient matrix $A$),
the monomial-dependency subspace (arising from the exponent matrix $B$) is the most crucial geometric object.
It is given by $D = \ker B \cap \ker \cayley$, where $\cayley \in \{0,1\}^{\ell \times m}$ is a Cayley matrix (having $\ell \times \ell$ blocks, in particular, rows of ones in the diagonal blocks and zeros elsewhere) indicating classes of indices.

\paragraph{Main previous result.}
By \cite[Theorem~5]{MuellerRegensburger2023a}, the solution set 
\begin{equation}
X_c = \{ x \in\R^n_> \mid A \left( c \circ x^B \right) = \zeros\}
\end{equation}
is given by
\[
X_c = (Y_c / c)^E \circ \e^{L^\perp} ,
\]
where
\begin{equation}
Y_c =  \{ y \in P \mid y^H = c^H \}
\end{equation}
is the solution set on the coefficient set, involving a basis matrix $H \in \R^{m \times d}$ of the monomial-dependency subspace $D \subset \R^m$,
that is, $D = \im H$ and $d = \dim D$.
Note that solvability depends on only $d<m$ lumped parameters $c^H$.

Finally, $E \in \R^{n \times m}$ is an exponentiation matrix (explicitly mapping elements of~$Y_c$ to elements of~$X_c$), see~\cite{MuellerRegensburger2023a},
and $L = B \, (\ker \cayley) \subseteq \R^n$ is the monomial difference subspace.
Both $E$ and $L$ do not depend on~$c$.
We note that 
\[
d = m - \ell - \dim L .
\]

If $L \neq \R^n$, then a nonempty solution set~$X_c$ necessarily contains exponential fibers of the form $x \circ \e^{L^\perp}$ and cannot be finite.
Hence, we consider the generic finiteness of the solution set~$Y_c$ (on the coefficient set), 
that is, the generic finiteness of exponential fibers in the (original) solution set~$X_c$.

Note that the dimension of $L$ depends on the decomposition of $C$ we choose. 
A finer decomposition leads to smaller $\ker \cayley$ and hence may lead to smaller $L = B \, (\ker \cayley)$ and hence larger $L^\perp$. 
That is, the maximal fibers may be immediately visible only for the finest decomposition.
In the trivial decomposition (or other decompositions between trivial and finest), they may be hidden in $Y_c$ which then cannot be finite.

\paragraph{Dimensions.} 
As we will show, generic finiteness (of exponential fibers) holds if the number of variables is less than/equal to the number of equations plus the dimension of the fibers, that is,
\[
n \le \rank A + \dim L^\perp ,
\]
or, equivalently, 
\[
\dim L \le \rank A .
\]
Since $\dim C = \dim (\ker A) = m - \rank A$ and $\dim P = \dim C - \ell$, this is further equivalent to
\[
\dim P \le d .
\]


\paragraph{Logarithmic-linear map.}
We are interested in the generic finiteness of the solution set $Y_c$ for positive parameters $c \in \R^m_>$.
Instead of solving $y^H=c^H$ for $y \in P$, we consider $H^\trans \ln y = H^\trans \ln c =: \gamma$
for the logarithmic lumped parameter $\gamma \in \R^d$.
Correspondingly, we define the logarithmic-linear map
\begin{equation} \label{eq:map}
\ff \colon P \to \R^d, \quad y \mapsto H^\trans \ln y
\end{equation}
and
\begin{equation}
\mathcal{Y}_\gamma = \{ y \in P \mid \ff(y) = \gamma \} ,
\end{equation}
for $\gamma \in \R^d$.

\subsection{Boundary parameter values} \label{sec:boundary}

First, we are interested in the set of non-proper parameter values (that have solution sets approaching $\partial P$),
\[
V_\text{np} =
\left\{
\gamma\in\R^d \; \middle| \;
\exists (y^k)_{k \in \N} \subseteq P 
\text{ and } y^* \in \partial P
\text{ s.t. }
y^k \to y^*
\text{ and }
\ff(y^k)=\gamma, \, \forall k
\right\}.
\]
These are the parameter values $\gamma$ for which $\mathcal{Y}_\gamma$ is not relatively compact in $P$. 
Clearly, this set is contained in the set of boundary parameter values,
\begin{equation}
V_\mathrm{bd} =
\left\{
\gamma\in\R^d \; \middle| \;
\exists (y^k)_{k \in \N} \subseteq P 
\text{ and } y^* \in \partial P
\text{ s.t. }
y^k \to y^*
\text{ and }
\ff(y^k)\to\gamma
\right\} .
\end{equation}
That is, $V_\text{np} \subseteq V_\mathrm{bd}$,
and hence
\[
\dim V_\text{np} \le \dim V_\mathrm{bd} .
\]


\begin{lemma}[Boundary-value dimension] \label{lem:bv_dim}
The set of boundary parameter values has codimension at least one,
\[
\dim V_\mathrm{bd} \le \dim P - 1 ,
\]
using the convention $\dim \emptyset = - 1$.
\end{lemma}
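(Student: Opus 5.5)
Our plan is to decompose the boundary $\partial P$ into relatively open faces of the closed polytope $\overline{P}$ and to describe the possible limits of $\ff$ face by face. Since $C = \ker A \cap \R^m_>$ is a subspace cone, $\overline{P}$ is a polytope in $\R^m_\ge$. Every proper face $F$ is the intersection of $\overline{P}$ with a coordinate face. So there is a nonempty index set $Z \subseteq \{1,\ldots,m\}$ with $y_i = 0$ on $F$ exactly for $i \in Z$, and $y_i > 0$ on $\relint F$ for $i \notin Z$. Because there are finitely many faces, and a finite union of sets of dimension at most $\dim P - 1$ again has dimension at most $\dim P - 1$, it suffices to show for each $F$ that the set $V_F$ of limits $\gamma = \lim \ff(y^k)$ with $y^k \in P$ and $y^k \to y^* \in \relint F$ has dimension at most $\dim P - 1$. (Here ``dimension'' is read as covering or Hausdorff dimension, or as contained in a finite union of images of smooth maps from manifolds of that dimension; we will use the latter.)

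Fix such a face $F$ with zero set $Z$. Write $H^\trans \ln y = H_{Z}^\trans \ln y_Z + H_{Z^c}^\trans \ln y_{Z^c}$. As $y^k \to y^*$, the second summand converges to $H_{Z^c}^\trans \ln y^*_{Z^c}$. This is a smooth function of $y^*$ on $\relint F$, whose dimension is at most $\dim P - 1$. The first summand has coordinates $\ln y_i^k \to -\infty$ for $i \in Z$. For $\ff(y^k)$ to converge, we therefore need $H_Z^\trans \ln y^k_Z$ to converge although $\ln y^k_Z$ diverges.

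This is the logarithmic-polytope step, and we expect it to be the main obstacle. The plan is to extract a subsequence along which $\ln y^k_Z / \|\ln y^k_Z\| \to -u$, with $u \in \R^Z_\ge$, $u \neq 0$, and necessarily $H_Z^\trans u = 0$. Then we must show that the remaining bounded part of $\ln y^k_Z$ contributes only a lower-dimensional set of limits. Equivalently, we must show that the degrees of freedom lost by approaching $F$ compensate for those gained from the limit of $H_Z^\trans \ln y^k_Z$.

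We would try first to exploit the linear constraints $A y = 0$ of $P$. Near $y^*$, the points $y \in P$ are parametrized by $y^*$ (in $\relint F$) together with transversal coordinates in the tangent cone $T$ of $\overline{P}$ at $F$. This cone has dimension $\dim P - \dim F$. The limiting value of $H_Z^\trans \ln y_Z$ along sequences in $T$ converging to the apex then ranges over a set $W_T$. We would bound $\dim W_T \le \dim P - \dim F - 1$ by a homogeneity argument: scaling $y_Z \mapsto t\,y_Z$ with $t \to 0$ shifts $\ln y_Z$ by $\ln t \cdot \ones$. Convergence thus forces the relevant limits to lie in a quotient by at least one direction. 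This yields one lost dimension, possibly after recursing over the faces of $T$ by induction on $|Z|$.

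Combining, $V_F \subseteq \{\ff_{Z^c}(y^*) + w : y^* \in \relint F,\ w \in W_T\}$, which is the image of a smooth or piecewise-linear map from a set of dimension at most $\dim F + (\dim P - \dim F - 1) = \dim P - 1$. Taking the union over the finitely many faces gives $\dim V_\mathrm{bd} \le \dim P - 1$. The empty case, for instance when $\overline P = P$ is a point with $\dim P = 0$, matches the convention $\dim\emptyset = -1$.
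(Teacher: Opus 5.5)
Your outline has the same architecture as the paper's proof, which reduces the statement to Lemma~\ref{lem:logpol}. That architecture is: decompose over the proper faces, write points of $P$ near $\relint F$ as $y^*+s$ with $y^*\in\relint F$ and $s$ in a transverse cone of dimension $\codim F=\dim P-\dim F$, and count $\dim F+(\codim F-1)$. The face decomposition, the splitting into $Z$ and $Z^c$, and the final Minkowski-sum count are fine, and your target $\dim W_T\le\codim F-1$ is the right intermediate claim.

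The gap is that this bound, which you flag as the main obstacle, is never proved, and the homogeneity mechanism you sketch does not prove it. Write $s_Z=r\,v$ with $r=\sum_{i\in Z}s_i$ and $v$ in the projective section $V$ of the cone, so $\dim V\le\codim F-1$. Then $H_Z^\trans\ln s_Z=b\ln r+H_Z^\trans\ln v$ with $b=H_Z^\trans\ones_Z$.
\begin{itemize}
\item Scale invariance (your ``quotient by one direction'') holds only if $b=0$. Even then $v^k$ may run into $\partial V$, where $\ln v$ diverges. So you need $\dim\overline{H_Z^\trans\ln(\relint V)}\le\dim V$, which is again a boundary-value statement, now for the smaller polytope $V$.
\item If $b\neq0$, there is no invariance. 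The term $b\ln r^k$ diverges, so convergence forces $v^k\to\partial V$ along a subsequence. One must then apply the orthogonal projection $\pi$ onto $b^\perp$, bound the logarithmic boundary values of $V$ under the \emph{modified} matrix $\pi H_Z^\trans$ by $\dim V-1$, and add back one dimension for $\R b$.
\end{itemize}

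Your hint of recursing over faces of the cone by induction on $|Z|$ points in the right direction, but the induction cannot be run on Lemma~\ref{lem:bv_dim} itself, with fixed $P$ and fixed $H^\trans$. It has to range over arbitrary polytopes $Q\subseteq\R^n_\ge$ meeting $\R^n_>$ and arbitrary matrices $M$, since the matrix changes to $\pi H_Z^\trans$. It also has to carry two claims at once, in the notation of Lemma~\ref{lem:logpol}: $\dim\partial_M(Q)\le\dim Q-1$, which feeds the case $b\neq0$, and $\dim\overline{M\ln(\relint Q)}\le\dim Q$, which feeds the case $b=0$.

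Your alternative normalization by the asymptotic log-direction $u$ does not bypass this. The remainder $\ln y^k_Z+\|\ln y^k_Z\|\,u$ need not be bounded, because divergence can occur at several rates. For example, take $\ln y^k_Z=(-k,-\sqrt{k},-\sqrt{k})$ with $H_Z^\trans=(0,1,-1)$: here $u=(1,0,0)$ and $H_Z^\trans\ln y^k_Z=0$ converges, yet the remainder is unbounded. So there is no bounded part to analyze; these nested scales are exactly what the recursion through projective sections handles. With the general two-part inductive statement and the case distinction on $b$, your outline becomes the paper's proof.
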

\begin{proof}
Let
\[
Q=\overline P
\qquad\text{and}\qquad
T=H^\trans.
\]
Then,
\[
P=\relint(\overline P) = \relint Q,
\quad
\partial P = \overline P \setminus P = Q \setminus \relint Q ,
\]
and
\[
\ff(y) = H^\trans \ln y = T \ln y
\quad \text{for} \quad
y\in P = \relint Q .
\]
Hence,  the set of boundary parameter values agrees with the set of logarithmic
boundary values as defined in Lemma~\ref{lem:logpol},
\[
V_\mathrm{bd}
=
\partial_T(Q) ,
\]
and Lemma~\ref{lem:logpol}~(i) yields
\[
\dim V_\mathrm{bd}
=
\dim\partial_T(Q)
\le
\dim Q-1
=
\dim P-1.
\]
\end{proof}

\begin{lemma}[Logarithmic polytope] \label{lem:logpol}
Let $Q \subseteq \R^n_\ge$ with $Q \cap \R^n_> \neq \emptyset$ (and hence $\relint Q \subseteq \R^n_>$) be a polytope and $T \in \R^{d \times n}$.
Further, let
\[
\partial_T(Q)
=
\left\{
\lambda \in \R^d \mid
\exists (q^k)_{k \in \N} \subseteq \relint Q
\text{ and }
q^* \in Q \setminus \relint Q
\text{ s.t. }
q^k \to q^*
\text{ and }
T \ln q^k \to \lambda
\right\} 
\]
be the associated set of logarithmic boundary values. Then,
\begin{enumerate}[(i)]
\item
$\dim \partial_T(Q) \le \dim Q - 1$ and
\item
$\dim \overline{T \ln (\relint Q)} \le \dim Q$,
\end{enumerate}
using the convention $\dim \emptyset = - 1$.
\end{lemma}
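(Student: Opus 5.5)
We prove both parts by a local description of $T\ln q$ near each face of $Q$, followed by a finite covering argument; throughout, ``dimension'' is understood so that a finite union of images of smooth (or locally Lipschitz) maps of $k$-dimensional manifolds has dimension at most $k$.

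\textbf{Part (ii).} The map $q\mapsto T\ln q$ is smooth on $\relint Q$, a manifold of dimension $\dim Q$, so its image has dimension at most $\dim Q$. To pass to the closure we also need the boundary limits, which are exactly $\partial_T(Q)$. Indeed, if $\lambda=\lim T\ln q^k$ with $q^k\in\relint Q$, then by compactness of $Q$ we may assume $q^k\to q^*\in Q$. Either $q^*\in\relint Q$, so $\lambda=T\ln q^*$ lies in the image, or $\lambda\in\partial_T(Q)$. Hence $\overline{T\ln(\relint Q)}=T\ln(\relint Q)\cup\partial_T(Q)$, and (ii) follows from (i).

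\textbf{Part (i): local picture at a face.} Fix a proper face $F$ of $Q$ with relative interior point $q^*$. Let $Z=\{i: q^*_i=0\}$, which is constant on $\relint F$, and note $Z\neq\emptyset$ because $\relint Q\subseteq\R^n_>$. Split $T\ln q=T_{Z^c}\ln q_{Z^c}+T_Z\ln q_Z$. The first summand converges to $T_{Z^c}\ln q^*_{Z^c}$, a smooth function on $\relint F$. For the second summand, write $q=q^*+v$ with $v$ in the tangent cone $K$ of $Q$ at $q^*$, a polyhedral cone with lineality space $\lin(F-q^*)$. Near $q^*$, the coordinates $q_Z=v_Z$ range over the polyhedral cone $K_Z=\pi_Z(K)$, which is open-dense-free of zeros only in its relative interior, and depends only on $F$. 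Therefore any limit $\lambda$ along $q^k\to q^*$ lies in
\[
T_{Z^c}\ln q^*_{Z^c} + \overline{\{T_Z\ln w : w\in\relint K_Z\}}^{\,\mathrm{lim},0},
\]
where the second set consists of limits of $T_Z\ln w^k$ with $w^k\to0$ in $\relint K_Z$.

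\textbf{Main obstacle: the cone limit set.} The key claim is that the set $S$ of these limits has dimension at most $\dim K_Z-1$. Since $\ln(tw)=\ln w+(\ln t)\ones$, scaling by $t\to0$ moves along the direction $T_Z\ones$. I would attempt the following. Normalize $w=t u$ with $u$ in the compact slice $U=\relint K_Z\cap\{\ones^\trans u=1\}$, which has dimension $\dim K_Z-1$. Then $T_Z\ln w=(\ln t)\,T_Z\ones+T_Z\ln u$ with $t\to0$, and $u$ may itself approach $\partial U$. Proceed by induction on dimension, applying the statement to the polytope $\overline U$ together with a lifted map. If $T_Z\ones\notin\lin$ of the relevant image, limits exist only when the divergence of $\ln t$ is compensated by $u\to\partial U$, and the induction hypothesis then bounds dimension by $\dim U-1$ plus one free direction. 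If $T_Z\ones$ is in the image span, the $\ln t$ freedom merges into the image of $T_Z\ln u$, giving total dimension at most $\dim U$. In both cases $\dim S\le\dim K_Z-1$. Combining with the $\relint F$ contribution gives dimension at most $\dim F+\dim K_Z-1\le\dim Q-1$, since $\dim F+\dim K_Z\le\dim Q$; this holds because $K_Z$ is the image of $K$ modulo directions along $F$, which do not change $q_Z=0$ to first order.

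Finally, $Q$ has finitely many faces, so $\partial_T(Q)$ is a finite union of sets of dimension at most $\dim Q-1$, which proves (i). The delicate point is making the induction in the cone step rigorous, in particular the compensating limits $u\to\partial U$, where one must set up the induction hypothesis to cover cones and slices uniformly. If that proves unwieldy, I would instead use the explicit form $\ln$ on a polyhedral cone via a triangulation into simplicial cones. There, $\ln w$ becomes a sum of logs of barycentric-type coordinates, and the limit set is a finite union of Minkowski sums of lower-dimensional linear images.
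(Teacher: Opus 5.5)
Your overall route is the paper's. You write $\partial_T(Q)$ as a finite union over proper faces, split coordinates into $Z=\zero F$ and its complement, and pass to the cone of directions into $Q$. You then decompose $w=tu$ with $t=\ones^\trans w$ and $u$ in a slice $U$ of dimension $\dim K_Z-1$, and induct on dimension. Your deduction of (ii) from (i) is exactly the paper's.

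The gap is in the cone step you flag, and it is not only a matter of rigor: your case distinction is the wrong one. Whether $b=T_Z\ones$ lies in ``the span of the image'' is irrelevant; what matters is $b=0$ versus $b\neq 0$. If $b\neq0$, then $(\ln t)\,b$ diverges as $t\to 0$. Any finite limit therefore forces $T_Z\ln u^k$ to diverge, so (after passing to a subsequence) $u^k$ converges to a point of $\overline U\setminus U$, whatever span $b$ lies in.

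Your ``merging'' argument does not bound the limit set. Take the vertex $0$ of $Q=\mathrm{conv}\{0,e_1,e_2,e_3\}$ with $T=\id$. Then $b=\ones$ lies in the span of $\ln(U)$ (take $u=\tfrac13\ones$), yet $\{(\ln t)b+\ln u\}=\ln(\R^3_>)=\R^3$ has dimension $\dim U+1$. The bound on the limit set (here empty) comes solely from the compensation argument of your first case.

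What is needed, and what the paper does, is the following.
\begin{itemize}
\item For $b\neq 0$: apply the orthogonal projection $\pi$ onto $(\im b)^\perp$, which kills the radial term, so that $\pi S\subseteq\partial_{\pi T_Z}(\overline U)$ up to the translate by $\pi T_{Z^c}\ln q^*_{Z^c}$. The induction hypothesis (i) for the lower-dimensional polytope $\overline U$, applied with the \emph{projected} map $\pi T_Z$ (not a lifted one), gives $\dim \pi S\le\dim U-1$. You then add one dimension for $\ker\pi$.
\item For $b=0$: one has $S\subseteq\overline{T_Z\ln U}$, and you need the induction hypothesis (ii) for $\overline U$.
\end{itemize}
Consequently (i) and (ii) must be proved by a simultaneous induction on $\dim Q$. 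You must also check that $\overline U=K_Z\cap\{\ones^\trans u=1\}$ is a polytope in $\R^Z_\ge$ meeting $\R^Z_>$, with $\dim\overline U\le\dim Q-\dim F-1<\dim Q$.

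Separately, your claim that $Z\neq\emptyset$ for every proper face is false: any vertex of $Q$ lying in $\R^n_>$ has $Z=\emptyset$. Such faces contribute $T\ln(\relint F)$, of dimension at most $\dim F\le\dim Q-1$. They must be handled separately, as the paper does, because your count $\dim F+\dim K_Z-1$ does not apply there. Once the $b=0$ versus $b\neq0$ split is in place, the triangulation fallback is unnecessary.
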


\begin{proof}
We prove (i) and (ii) simultaneously by induction on $\dim Q$.

$\bullet$
\quad
$\dim Q = 0$:

In other words,
$Q = \{ q^* \}$ with $q^* \in \R^n_>$.
Hence,
$Q = \relint Q$ and $\overline{T \ln (\relint Q)}$ are singletons,
$Q \setminus \relint Q$ and $\partial_T(Q)$ are empty,
and both statements (i) and (ii) hold.

$\bullet$
\quad
$\dim Q \ge 1$:

$-$ 
\quad
statement (i): 

Let $G$ be a proper face of $Q$ and
\[
\partial_T(Q|G)
=
\left\{
\lambda \in \R^d \mid
\exists (q^k)_{k \in \N} \subseteq \relint Q
\text{ and }
q^* \in \relint G
\text{ s.t. }
q^k \to q^*
\text{ and }
T \ln q^k \to \lambda
\right\} .
\]
Since 
\[
\partial_T (Q) = \bigcup_{G < Q} \partial_T(Q|G) ,
\]
it remains to show 
\[
\dim \partial_T(Q|G) \le \dim Q - 1 .
\]

Let $\IF = \zero G = \{ i \mid q_i=0, \, \forall q \in G \}$ and $\JF = \{1,\ldots,n\} \setminus \IF$. 
For $q^* \in \relint G$,
\[
q^*_\IF = 0
\quad \text{and} \quad
q^*_\JF > 0 .
\]
Let $G_\JF = \pi_\JF (G) \subseteq \R^\JF_\ge$ and note $\dim G_\JF = \dim G$.

\underline{If} $\relint G \subseteq \R^n_>$, 
that is, $I = \emptyset$,
then
\[
\partial_T(Q|G)=T\ln(\relint G).
\]
Every point of $\relint G$ can be approached from $\relint Q$,
and $T\ln$ is continuous and locally Lipschitz on $\relint G$.
Exhausting $\relint G$ by compact subsets yields
\[
\dim \partial_T(Q|G)
=
\dim (T\ln(\relint G))
\le
\dim G
\le
\dim Q-1.
\]

\underline{Otherwise},
$\IF \neq \emptyset$.

Let $C_G \subseteq \R^n$ be the inward transverse cone from $G$ into $Q$
and note that $s_\IF \ge 0$ for $s \in C_G$.
In particular, $\dim C_G = \codim_Q G$.
Near $\relint G$, points in $\relint Q$ can be written as
\[
q=q^*+s
\quad \text{with } q^* \in \relint G \text{ and } s \in \relint C_G .
\]
In particular,
\[
q_\IF=s_\IF, \quad
q_\JF= q^*_\JF+s_\JF .
\]
Clearly, $s \to \zeros$ as $q \to q^*$.
Further, let 
\[
V_G = \pi_\IF(C_G) \cap \Delta_\IF \subseteq \R^I_\ge
\quad \text{with} \quad
\Delta_\IF = \left\{ q \in \R^I_\ge \mid \sum_{i \in I} q_i=1 \right\}
\]
be a projective section of $\pi_\IF (C_G)$.
In particular, $\dim \pi_\IF(C_G) \le \dim C_G$
and hence
\[
\dim V_G \le \dim C_G - 1 = \codim_Q G - 1 \le \dim Q - 1 .
\]
Further, since $\relint Q \subseteq \R^n_>$ and $C_G$ is the inward transverse cone from $G$ into $Q$, 
${\relint V_G \subseteq \R^I_>}$.
Points in $\relint \pi_\IF(C_G)$ can be written uniquely as
\[
s_\IF=rv
\]
with radial scale $r = \sum_{i \in I} s_i > 0$ and projective normal direction $v = s_\IF / r \in \relint V_G$.
That is,
\[
\ln s_\IF = \ones_\IF \cdot \ln r + \ln v .
\]
Clearly, $r \to 0$ as $s \to \zeros$.

With $T = \begin{pmatrix} T_\IF & T_\JF \end{pmatrix}$ and $b = T_\IF \ones_\IF$,
\begin{equation} \label{eq:limit}
\begin{aligned}
T \ln q &= T_\IF \ln s_{\IF} + T_\JF \ln(q^*_\JF+s_\JF) \\
&=
T_\IF \ones_\IF \ln r + T_\IF \ln v + T_\JF \ln(q^*_\JF+s_\JF) \\
&= b \, \ln r + T_\IF \ln v + T_\JF \ln(q^*_\JF+s_\JF) ,
\end{aligned}
\end{equation}
where we omit the superscripts ($\phantom{}^k$) for the sequences $q,s,r,v$.

As $q \to q^*$, 
the third term converges to $T_\JF \ln q^*_\JF$.
To analyze the second term, we perform a case distinction regarding the first term.
\begin{itemize}
\item[$\bullet$] 
$b=0$:

Using Eqn.~\eqref{eq:limit}, we obtain the Minkowski sum
\[
\partial_T (Q|G) = \overline{T_\IF \ln (\relint V_G)} + T_\JF \ln (\relint G_J) .
\]
By the induction hypothesis (ii) for $V_G$, 
\[
\dim \overline{T_\IF \ln (\relint V_G)} \le \dim V_G
\]
and hence
\begin{align*}
\dim \partial_T (Q|G) & \le \dim \overline{T_\IF \ln (\relint V_G)} + \dim \left( T_\JF \ln (\relint G_J) \right) \\
& \le \dim V_G + \dim G \\
& \le \codim_Q G - 1 + \dim G \\
&= \dim Q - 1 .
\end{align*}

\item[$\bullet$] 
$b \neq 0$:

After passage to a subsequence,
any sequence $v^k \in \relint V_G$ giving rise to a finite value
must satisfy
\[
v^k \to v^* \in V_G\setminus\relint(V_G) .
\]
Indeed, otherwise $T_\IF\ln v^k$ would remain bounded, whereas
$b \, \ln r^k$ diverges.

Let $\pi = \id_d - b \, b^\trans / (b^\trans b) \in \R^{d \times d}$
be the orthogonal projection onto $(\im b)^\perp$.
Applying $\pi$ to Eqn.~\eqref{eq:limit} eliminates the radial term, and 
\[
\pi (\partial_T (Q|G)) 
\subseteq
\left\{ 
\mu + \pi(T_\JF \ln q^*_\JF) \mid
\mu \in \partial_{\pi T_\IF} (V_G)
, \;
q^* \in \relint G
\right\} ,
\]
where the latter represents a Minkowski sum.

In this case, we only have inclusion: Existence of a sequence $v^k$ with ${\pi T_\IF \ln v^k \to \mu}$
does not imply existence of a sequence $(r^k,v^k)$ with $r^k \to 0$ such that
\[
b \, \ln r^k+T_\IF\ln v^k \to \gamma
\]
and $\mu=\pi \gamma$.

By the induction hypothesis (i) for $V_G$, 
\[
\dim \partial_{\pi T_\IF} (V_G) \le \dim V_G - 1
\]
and hence
\begin{align*}
\dim \pi (\partial_T (Q|G)) &\le \dim \partial_{\pi T_\IF} (V_G) + \dim G \\
& \le \dim V_G - 1 + \dim G \\
& \le \codim_Q G - 1 - 1 + \dim G \\
&= \dim Q - 2 .
\end{align*}
Since $\pi$ has one-dimensional kernel,
\begin{align*}
\dim \partial_T (Q|G) & \le \dim \pi (\partial_T (Q|G)) + 1 \\
& \le \dim Q - 2 + 1 \\
&= \dim Q - 1 .
\end{align*}
\end{itemize}

$-$
\quad
statement (ii):

Having established (i) for $Q$, we now prove (ii).
Since $Q$ is compact, every sequence in $\relint Q$ has a subsequence converging to a point of $Q$.
If the limit belongs to $\relint Q$, continuity of $T\ln$ yields a point in $T\ln(\relint Q)$.
If the limit belongs to $Q\setminus\relint Q$, the finite limit value
belongs to $\partial_T(Q)$.
Thus,
\[
\overline{T\ln(\relint Q)}
=
T\ln(\relint Q)\cup\partial_T(Q).
\]

Now, $T\ln$ is continuous and locally Lipschitz on $\relint Q$.
Exhausting $\relint Q$ by compact subsets yields
\[
\dim (T \ln (\relint Q)) \le \dim Q.
\]
By statement (i),
\[
\dim \partial_T(Q) \le \dim Q-1
\]
and hence
\[
\dim \overline{T\ln(\relint Q)}
\le
\dim Q.
\]
\end{proof}


\paragraph{Solutions vs.\ parameters.}
To show the closedness of $V_\mathrm{bd}$,
we consider solutions and parameters at the same time,
that is,
we introduce the set
\begin{equation}
\vv =
\left\{
(y,\gamma) \in P \times \R^d \mid
\ff(y) = \gamma
\right\} ,
\end{equation}
which is the graph of $\ff$,
\[
\vv
=
\left\{
(y,\ff(y)) \mid y \in P 
\right\} \subseteq P \times \R^d ,
\]
in particular, 
$
\dim \vv = \dim P .
$
We take the closure of $\vv$ in
$
\overline{P} \times \R^d,
$
and define the graph boundary
\begin{equation}
\bb
=
\overline{\vv} \setminus \vv .
\end{equation}


\begin{lemma}[Graph boundary] \label{lem:gb}
Let $V_\mathrm{bd}$ be the set of boundary parameter values and $\bb$ be the graph boundary.
Then,
\begin{enumerate}[(i)]
\item
$\bb = \overline{\vv} \cap (\partial P \times \R^d)$ and
\item
$V_\mathrm{bd} = \pi_\gamma (\bb)$.
\end{enumerate}
\end{lemma}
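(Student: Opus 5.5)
Both parts follow from unwinding the definitions, once we note one topological fact: $\vv$ is closed in $P \times \R^d$, because it is the graph of the continuous map $\ff$ on $P$. Throughout, $\overline{\vv}$ is the closure in $\overline P \times \R^d$, and $\overline P \times \R^d$ is the disjoint union of $P \times \R^d$ and $\partial P \times \R^d$.

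For (i), we first show $\bb \subseteq \partial P \times \R^d$. Take $(y,\gamma) \in \bb$. It lies in $\overline{\vv} \subseteq \overline P \times \R^d$, so either $y \in P$ or $y \in \partial P$. Suppose $y \in P$. Then $(y,\gamma)$ is a limit of points $(y^k,\ff(y^k))$ with $y^k \in P$. Continuity of $\ff$ at $y$ gives $\gamma = \ff(y)$, so $(y,\gamma) \in \vv$, which contradicts $(y,\gamma)\in\bb$. Hence $y \in \partial P$, and therefore $\bb \subseteq \overline{\vv} \cap (\partial P\times\R^d)$. For the converse, note that $\vv \subseteq P \times \R^d$ is disjoint from $\partial P \times \R^d$. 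So any point of $\overline{\vv} \cap (\partial P \times \R^d)$ lies in $\overline{\vv}\setminus\vv = \bb$.

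For (ii), the plan is to match sequences. Take $\gamma \in V_\mathrm{bd}$, witnessed by $y^k \in P$ with $y^k \to y^* \in \partial P$ and $\ff(y^k)\to\gamma$. Then $(y^k,\ff(y^k)) \in \vv$ converges in $\overline P\times\R^d$ to $(y^*,\gamma)$. This limit lies in $\overline{\vv}\cap(\partial P\times\R^d) = \bb$ by (i), so $\gamma = \pi_\gamma(y^*,\gamma) \in \pi_\gamma(\bb)$. Conversely, take $(y^*,\gamma)\in\bb$. By (i), $y^*\in\partial P$. Since $\overline P\times\R^d$ is a metric space, membership in $\overline{\vv}$ yields a sequence $(y^k,\ff(y^k))\in\vv$ converging to $(y^*,\gamma)$. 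This means exactly $y^k\in P$, $y^k\to y^*\in\partial P$ and $\ff(y^k)\to\gamma$, so $\gamma\in V_\mathrm{bd}$.

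There is no real obstacle here. The only point needing care is the step in (i) that rules out limit points of $\vv$ over $P$ itself. That step uses continuity of $\ff = H^\trans \ln$ on $P \subseteq \R^m_>$, and we invoke it explicitly. All divergence of the logarithm happens over $\partial P$, which is why $\bb$ sits entirely over the boundary.
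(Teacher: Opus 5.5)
Your proof is correct and follows essentially the same route as the paper. In (i) you use continuity of $\ff$ on $P$ to show that limit points of $\vv$ lying over $P$ already belong to $\vv$, and disjointness of $\vv$ from $\partial P \times \R^d$ for the reverse inclusion. In (ii) you match the witnessing sequences for $V_\mathrm{bd}$ with sequences in $\vv$ converging to points of $\bb$; your remark that sequences suffice because $\overline P \times \R^d$ is metric makes explicit what the paper uses tacitly.
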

\begin{proof}
(i)
Let
\[
(y^k, \ff(y^k))
\to
(y, \gamma)
\]
with $y^k\in P$ and $y\in P$.
Since $\ff$ is continuous on $P$, 
\[
\ff(y^k) \to \ff(y) .
\]
Hence $\gamma = \ff(y)$ and
\[
(y, \gamma) \in \vv.
\]
That is,
\[
\overline{\vv} \setminus \vv \subseteq \partial P \times \R^d ,
\]
whereas
\[
\vv\subseteq P\times\R^d .
\]
Hence,
\[
\bb
=
\overline{\vv}\setminus\vv
=
\overline{\vv}\cap(\partial P\times\R^d).
\]

(ii)
Let $\gamma\in V_\mathrm{bd}$. Then there exist
\[
(y^k)_{k\in\N} \subseteq P
\quad \text{and} \quad
y^*\in\partial P
\]
such that
\[
(y^k,\ff(y^k))
\to
(y^*,\gamma) \in \overline{\vv} .
\]
By statement (i),
\[
(y^*,\gamma)\in\bb 
\]
and
\[
\gamma\in\pi_\gamma(\bb).
\]

Conversely, let
\[
\gamma\in\pi_\gamma(\bb).
\]
By statement (i), there exists $y^*\in\partial P$ such that
\[
(y^*,\gamma)\in\bb.
\]
Hence, there is a sequence
\[
(y^k)_{k\in\N} \subseteq P
\]
such that
\[
(y^k,\ff(y^k))
\to
(y^*,\gamma) ,
\]
which implies that $\gamma\in V_\mathrm{bd}$.
\end{proof}

%

\begin{lemma}[Boundary-value closedness] \label{lem:bv_closed}
The set $V_\mathrm{bd}$ of boundary parameter values is closed.
\end{lemma}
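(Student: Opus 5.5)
We show that $V_\mathrm{bd}$ contains all its limit points, using a diagonal argument; compactness of $\overline{P}$ is the only ingredient needed. Let $(\gamma^j)_{j\in\N}\subseteq V_\mathrm{bd}$ with $\gamma^j\to\gamma$. We aim to show $\gamma\in V_\mathrm{bd}$.

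By Lemma~\ref{lem:gb}~(ii), $\gamma^j \in \pi_\gamma(\bb)$. By Lemma~\ref{lem:gb}~(i), there are $y^{*,j}\in\partial P$ with $(y^{*,j},\gamma^j)\in\overline{\vv}\cap(\partial P\times\R^d)$.

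The key topological point is that $\partial P=\overline{P}\setminus P$ is closed. Indeed, $\overline{P}$ is a compact polytope and $P=\relint\overline{P}$ is relatively open in it. Hence $\partial P$ is compact. Passing to a subsequence, we may assume $y^{*,j}\to y^*\in\partial P$.

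Then $(y^{*,j},\gamma^j)\to(y^*,\gamma)$. Since $\overline{\vv}$ is closed in $\overline{P}\times\R^d$, the limit lies in $\overline{\vv}$. It also lies in $\partial P\times\R^d$, since $y^* \in \partial P$. So by Lemma~\ref{lem:gb}~(i), $(y^*,\gamma)\in\bb$, and by Lemma~\ref{lem:gb}~(ii), $\gamma\in V_\mathrm{bd}$.

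Alternatively, one can argue directly with a diagonal sequence. For each $j$, pick $y^{j}\in P$ with $\|y^{j}-y^{*,j}\|<1/j$ and $\|\ff(y^{j})-\gamma^j\|<1/j$. Then $y^j\to y^*\in\partial P$ and $\ff(y^j)\to\gamma$, which is exactly the definition of $V_\mathrm{bd}$.

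The only step requiring care is extracting a convergent subsequence of the boundary points $y^{*,j}$. This uses boundedness of $\overline{P}$, which holds because $P\subseteq\Delta$, together with closedness of $\partial P$. Everything else is formal.
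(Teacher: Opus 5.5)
Your proof is correct and follows essentially the paper's argument. You take boundary points $y^{*,j}\in\partial P$ with $(y^{*,j},\gamma^j)\in\bb$ via Lemma~\ref{lem:gb}, pass to a convergent subsequence using compactness of $\partial P$, and use closedness of $\bb=\overline{\vv}\cap(\partial P\times\R^d)$ to get $(y^*,\gamma)\in\bb$, hence $\gamma\in V_\mathrm{bd}$. Your diagonal variant, which verifies the definition of $V_\mathrm{bd}$ directly by unpacking the closure $\overline{\vv}$, is also valid.
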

\begin{proof}
By Lemma~\ref{lem:gb} (i), 
the graph boundary $\bb = \overline{\vv} \cap (\partial P \times \R^d)$ is the intersection of two closed sets,
and hence $\bb$ is closed.

To show that $V_\mathrm{bd}$ is closed, let $(\gamma^k)_{k \in \mathbb{N}} \subseteq V_\mathrm{bd}$ be a sequence converging to $\gamma \in \mathbb{R}^d$. 
By Lemma~\ref{lem:gb}~(ii),
there is a corresponding sequence $(y^k)_{k \in \mathbb{N}} \subseteq \partial P$ such that $(y^k, \gamma^k) \in \bb$. 
Since $\partial P$ is compact,
$y^k$ converges to $y^* \in \partial P$, after passing to a subsequence and calling it $y^k$ again.
Hence, $(y^k, \gamma^k)$ converges to $(y^*, \gamma)$ in $\partial P \times \mathbb{R}^d$.
Since $\bb$ is closed, 
$(y^*, \gamma) \in \bb$ 
which implies $\gamma \in \pi_\gamma(\bb) = V_\mathrm{bd}$,
by Lemma~\ref{lem:gb} (ii).
\end{proof}

Now, we can conclude the analysis of boundary parameter values.
Let 
\begin{equation}
V_\mathrm{bd}^c
=
\R^d\setminus V_\mathrm{bd} .
\end{equation}

\begin{corollary}[Boundary-good parameters] \label{cor:boundary_good}
Let $\dim P \le d$.
Then, the set
$
V_\mathrm{bd}^c
$
is open and dense.
\end{corollary}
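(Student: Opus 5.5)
Openness follows directly from Lemma~\ref{lem:bv_closed}: $V_\mathrm{bd}$ is closed in $\R^d$, so its complement $V_\mathrm{bd}^c$ is open. No use of the dimension hypothesis is needed here.

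For density, we combine Lemma~\ref{lem:bv_dim} with the hypothesis $\dim P \le d$. This gives
\[
\dim V_\mathrm{bd} \le \dim P - 1 \le d - 1 < d .
\]
We then need that a closed subset of $\R^d$ of dimension less than $d$ has empty interior. Given that, every nonempty open set of $\R^d$ meets $V_\mathrm{bd}^c$, so $V_\mathrm{bd}^c$ is dense.

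The one point requiring care is what ``$\dim$'' means in Lemma~\ref{lem:bv_dim}. The bounds there were obtained from images of relatively open polyhedral pieces under locally Lipschitz maps, exhausted by compact subsets, together with finite unions, Minkowski sums and closures. So the natural reading is Hausdorff dimension, or at least the statement that $V_\mathrm{bd}$ is contained in a countable union of locally Lipschitz images of sets of dimension at most $d-1$.

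Under either reading, $V_\mathrm{bd}$ has $d$-dimensional Lebesgue measure zero. For Hausdorff dimension $<d$ this holds because $\mathcal{H}^d$-null sets are Lebesgue-null. For the covering reading it holds because Lipschitz maps do not increase Hausdorff dimension, and a countable union of null sets is null. A measure-zero set cannot contain a nonempty open ball, so $V_\mathrm{bd}$ has empty interior, which gives density.

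This measure-theoretic step is the main (mild) obstacle, since it depends on fixing the notion of dimension implicit in Lemma~\ref{lem:bv_dim}. I would make that explicit as Hausdorff dimension, and note in passing that the argument also yields that $V_\mathrm{bd}$ has Lebesgue measure zero. This also gives the measure-theoretic genericity used later.
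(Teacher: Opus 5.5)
Your proposal is correct and follows the paper's proof: openness comes from the closedness in Lemma~\ref{lem:bv_closed}, and density holds because Lemma~\ref{lem:bv_dim} together with $\dim P \le d$ gives $\dim V_\mathrm{bd} < d$, hence empty interior. Passing through Lebesgue measure zero is a valid way to justify that last step, which the paper leaves implicit, and it also supplies the measure-zero claim later used in Theorem~\ref{thm:generic_finiteness}; if you fix a notion of dimension, use your countable-Lipschitz-cover reading rather than bare Hausdorff dimension, since Hausdorff dimension is not subadditive under the Minkowski sums used in Lemma~\ref{lem:bv_dim}.
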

\begin{proof}
By Lemma~\ref{lem:bv_closed}, the set $V_\mathrm{bd}$ is closed.
By Lemma~\ref{lem:bv_dim},
$
\dim V_\mathrm{bd}
\le
\dim P-1 
\le
d - 1
< d
$
and hence $V_\mathrm{bd}$ has empty interior.
Thus, its complement $V_\mathrm{bd}^c$ is open and dense.
\end{proof}

\begin{lemma}[Properness away from boundary values] \label{lem:proper}
The restriction
\[
\ff \colon
\ff^{-1}(V_\mathrm{bd}^c)
\to
V_\mathrm{bd}^c
\]
is proper.
\end{lemma}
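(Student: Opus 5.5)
We show that for every compact $K \subseteq V_\mathrm{bd}^c$ the preimage $\ff^{-1}(K) \subseteq P$ is compact. Because $\ff$ is continuous on $P$, the set $\ff^{-1}(K)$ is closed in $P$. It is also bounded, since $P \subseteq \Delta$. So the only thing that can fail is that $\ff^{-1}(K)$ is not closed in $\R^m$: a sequence in it could accumulate on $\partial P$. We rule this out using sequential compactness.

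Take any sequence $(y^k)_{k\in\N} \subseteq \ff^{-1}(K)$ and set $\gamma^k = \ff(y^k) \in K$.
\begin{enumerate}[(i)]
\item Since $\overline{P}$ and $K$ are compact, we pass to a subsequence along which $y^k \to y^* \in \overline{P}$ and $\gamma^k \to \gamma \in K$.
\item Suppose $y^* \in \partial P$. Then $(y^k)$ is a sequence in $P$ with $y^k \to y^* \in \partial P$ and $\ff(y^k) \to \gamma$. By the definition of $V_\mathrm{bd}$, this gives $\gamma \in V_\mathrm{bd}$.
\item This contradicts $\gamma \in K \subseteq V_\mathrm{bd}^c$. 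Hence $y^* \in P$.
\item By continuity, $\ff(y^*) = \lim_k \ff(y^k) = \gamma \in K$, so $y^* \in \ff^{-1}(K)$.
\end{enumerate}
Thus every sequence in $\ff^{-1}(K)$ has a subsequence converging inside $\ff^{-1}(K)$, so $\ff^{-1}(K)$ is compact.

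The restriction is well defined as a map $\ff^{-1}(V_\mathrm{bd}^c) \to V_\mathrm{bd}^c$, and for any $K \subseteq V_\mathrm{bd}^c$ its preimage under the restriction equals $\ff^{-1}(K)$. Properness of the restriction therefore follows from the compactness just shown.

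The argument has no real obstacle; its content is already contained in the definition of $V_\mathrm{bd}$. The one point to get right is in step (ii): we must use that $V_\mathrm{bd}$ only requires $\ff(y^k) \to \gamma$, not $\ff(y^k) = \gamma$, since the values $\gamma^k$ vary along the sequence. This is exactly why we work with $V_\mathrm{bd}$ rather than $V_\text{np}$. Lemma~\ref{lem:bv_closed} is not needed for this step. It ensures that $V_\mathrm{bd}^c$ is open, so that the restriction lives on an open domain, as needed later for Sard's theorem.
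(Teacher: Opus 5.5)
Your proof is correct and is essentially the paper's argument: you extract a convergent subsequence in the compact set $\overline{P}\times K$, rule out a limit on $\partial P$ because it would force $\gamma\in V_\mathrm{bd}$, and conclude by continuity of $\ff$ on $P$. The only difference is cosmetic: you invoke the definition of $V_\mathrm{bd}$ directly, whereas the paper routes that step through the graph boundary $\bb$ and Lemma~\ref{lem:gb}~(ii).
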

\begin{proof}
Let $K\subseteq V_\mathrm{bd}^c$ be compact.
We show that
\[
\ff^{-1}(K) \subseteq P
\]
is compact. 
Let
\[
(y^k)_{k\in\N}
\subseteq
\ff^{-1}(K).
\]
Since $\overline{P} \times K$ is compact, 
\[
(y^k, \ff(y^k))
\to
(y^*, \gamma)
\in
\overline{P} \times K
\]
after passing to a subsequence and calling it $y^k$ again.
If $y^*\in \partial P$, then
\[
(y^*,\gamma)\in\bb,
\]
and hence
\[
\gamma\in V_\mathrm{bd} ,
\]
contradicting $K\subseteq V_\mathrm{bd}^c$.
Hence, $y^*\in P$. By continuity (of $\ff$ on $P$),
\[
\ff(y^*)=\gamma\in K.
\]
Thus, $\ff^{-1}(K)$ is compact.
\end{proof}


\subsection{Critical parameter values} \label{sec:critical}

Finally, we define the set of critical parameter values (over the proper region).
Let
\begin{subequations}
\begin{equation}
\crit(\ff)
=
\left\{
y \in P \mid
\rank D\ff_y<d
\right\} ,
\end{equation}
and 
\begin{equation}
V_\mathrm{crit}
=
\ff \left(
\crit(\ff) \cap \ff^{-1}(V_\mathrm{bd}^c)
\right)
\subseteq V_\mathrm{bd}^c 
\end{equation}
\end{subequations}
be the set of critical parameter values over $V_\mathrm{bd}^c$.

\begin{lemma}[Critical values] \label{lem:crit}
The set $V_\mathrm{crit}$ is closed in $V_\mathrm{bd}^c$ and has Lebesgue measure zero.
\end{lemma}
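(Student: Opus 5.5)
Two things have to be shown: measure zero via Sard's theorem, and closedness via the properness from Lemma~\ref{lem:proper}. Throughout, $P$ is a relatively open polytope, so it is a smooth manifold of dimension $\dim P$ (an open subset of its affine hull). The map $\ff(y)=H^\trans \ln y$ is smooth ($C^\infty$) on $P \subseteq \R^m_>$, and $D\ff_y$ denotes the differential restricted to the tangent space $\lin(P-P)$.

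For the \emph{measure-zero} claim, the set $U=\ff^{-1}(V_\mathrm{bd}^c)$ is open in $P$, since $V_\mathrm{bd}^c$ is open by Lemma~\ref{lem:bv_closed}. The points of $\crit(\ff)\cap U$ are exactly the critical points of the smooth map $\ff|_U\colon U\to\R^d$ in the differential-topological sense, namely those where the differential is not surjective onto $\R^d$. Sard's theorem then gives that $V_\mathrm{crit}=\ff(\crit(\ff)\cap U)$ has Lebesgue measure zero in $\R^d$. This holds in any dimension, and in particular it covers the case $\dim P<d$, where every point is critical and the image of a manifold of lower dimension under a smooth map is null anyway.

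For \emph{closedness in $V_\mathrm{bd}^c$}, the set $\crit(\ff)$ is closed in $P$, because it is the set where all $d\times d$ minors of the continuous matrix field $D\ff_y$ vanish. Hence $\crit(\ff)\cap U$ is closed in $U$. By Lemma~\ref{lem:proper}, $\ff|_U\colon U\to V_\mathrm{bd}^c$ is proper, and a proper continuous map into a locally compact Hausdorff (metrizable) space is closed.

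I will nonetheless verify the closedness directly with sequences. Take $\gamma^k\in V_\mathrm{crit}$ with $\gamma^k\to\gamma\in V_\mathrm{bd}^c$, and pick $y^k\in\crit(\ff)\cap U$ with $\ff(y^k)=\gamma^k$. Since $V_\mathrm{bd}^c$ is open, the set $K=\{\gamma^k\}\cup\{\gamma\}$ is a compact subset of it. Properness makes $\ff^{-1}(K)$ compact, so a subsequence satisfies $y^k\to y^*\in P$. Continuity gives $\ff(y^*)=\gamma$, so $y^*\in U$, and closedness of $\crit(\ff)$ in $P$ gives $y^*\in\crit(\ff)$. Therefore $\gamma\in V_\mathrm{crit}$.

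The only delicate point is the closedness step. Without properness, critical points could escape to $\partial P$, and then the limit $\gamma$ would be a boundary value rather than a critical value. This is exactly why $\gamma$ is restricted to $V_\mathrm{bd}^c$ and why Lemma~\ref{lem:proper} is needed.
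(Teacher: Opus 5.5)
Your proof is correct and follows the paper's argument. Closedness of $\crit(\ff)$ in $P$, combined with the properness from Lemma~\ref{lem:proper} (a proper map into the locally compact space $V_\mathrm{bd}^c$ is closed), gives closedness of $V_\mathrm{crit}$ in $V_\mathrm{bd}^c$, and Sard's theorem gives measure zero; your explicit sequence check and your reading of $D\ff_y$ as the differential on the tangent space $\lin(P-P)$ are sound additions that do not change the route.
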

\begin{proof}
The critical set $\crit(\ff)$ is closed in $P$.
Hence,
\[
\crit(\ff)\cap\ff^{-1}(V_\mathrm{bd}^c)
\]
is closed in $\ff^{-1}(V_\mathrm{bd}^c)$.
By Lemma~\ref{lem:proper}, the restriction
\[
\ff \colon \ff^{-1}(V_\mathrm{bd}^c)\to V_\mathrm{bd}^c
\]
is proper. Hence, it maps closed subsets to closed subsets.
Thus
\[
V_\mathrm{crit}
=
\ff \bigl(
\crit(\ff)\cap\ff^{-1}(V_\mathrm{bd}^c)
\bigr)
\]
is closed in $V_\mathrm{bd}^c$.

Since $\ff$ is smooth, Sard's theorem implies that its set of critical
values has Lebesgue measure zero. Hence $V_\mathrm{crit}$ has
Lebesgue measure zero.
\end{proof}


\subsection{Main result} \label{sec:main}

Now, we can combine our analyses of boundary and critical parameters.
Let
\begin{subequations}
\begin{equation}
V_\mathrm{bad}
=
V_\mathrm{bd} \cup V_\mathrm{crit}
\end{equation}
and
\begin{equation}
V_\mathrm{good}
=
\R^d \setminus V_\mathrm{bad}
=
\R^d \setminus V_\mathrm{bd} \setminus V_\mathrm{crit}
=
V_\mathrm{bd}^c \setminus V_\mathrm{crit}.
\end{equation}
\end{subequations}

\begin{theorem}[Generic finiteness] \label{thm:generic_finiteness}
Let $\dim P \le d$.
Then, 
\begin{enumerate}[(i)]
\item
$V_\mathrm{good}$ is open and dense in $\R^d$.
\item
For every $\gamma\in V_\mathrm{good}$, the fiber
$
\ff^{-1}(\gamma)
$
is finite (in particular, empty if ${\dim P < d}$).
\end{enumerate}
\end{theorem}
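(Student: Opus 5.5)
Part (i) combines Corollary~\ref{cor:boundary_good} with Lemma~\ref{lem:crit}. For openness: $V_\mathrm{bd}^c$ is open in $\R^d$ by Lemma~\ref{lem:bv_closed}, and $V_\mathrm{crit}$ is closed in $V_\mathrm{bd}^c$ by Lemma~\ref{lem:crit}. Hence $V_\mathrm{good}=V_\mathrm{bd}^c\setminus V_\mathrm{crit}$ is open in $V_\mathrm{bd}^c$, and therefore open in $\R^d$.

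For density, I would fix a nonempty open set $U\subseteq\R^d$. Since $V_\mathrm{bd}^c$ is open and dense (Corollary~\ref{cor:boundary_good}, which uses $\dim P\le d$), the set $U\cap V_\mathrm{bd}^c$ is nonempty and open, so it has positive Lebesgue measure. Since $V_\mathrm{crit}$ has measure zero, $U\cap V_\mathrm{bd}^c\setminus V_\mathrm{crit}$ is nonempty, which shows that $V_\mathrm{good}$ meets $U$. As a by-product, $V_\mathrm{bad}$ has measure zero: $V_\mathrm{bd}$ is a closed set of dimension $<d$, and I would read the dimension in Lemma~\ref{lem:bv_dim} as a covering/Hausdorff-type dimension, which gives measure zero. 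This remark is not needed for (i).

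For (ii), fix $\gamma\in V_\mathrm{good}$. Since $\gamma\in V_\mathrm{bd}^c$, Lemma~\ref{lem:proper} gives that $\ff^{-1}(\{\gamma\})$ is compact in $P$. Since $\gamma\notin V_\mathrm{crit}$, every $y\in\ff^{-1}(\gamma)$ is a regular point, i.e.\ $\rank D\ff_y=d$.

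Here $\ff$ is regarded as a smooth map on the relatively open polytope $P$, which is a smooth manifold of dimension $\dim P$ (an open subset of the affine hull of $P$). Its differential maps the tangent space $\lin(P-P)$, of dimension $\dim P$, into $\R^d$. Full rank $d$ therefore forces $\dim P\ge d$. Consequently, if $\dim P<d$, no regular points exist and the fiber over $\gamma$ is empty.

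If $\dim P=d$, then $D\ff_y$ is an isomorphism at every point of the fiber. By the inverse function theorem, each such point has a neighborhood in $P$ on which $\ff$ is injective, so the fiber is discrete. A compact discrete set is finite, which proves the claim.

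The main point to be careful about is that $\crit(\ff)$ and $\rank D\ff_y$ refer to the differential restricted to the tangent space of $P$, namely $H^\trans\diag(1/y)$ restricted to $\lin(P-P)$, rather than to the ambient $\R^m$. Otherwise the rank comparison with $\dim P$ and the application of Sard's theorem (between manifolds of dimensions $\dim P$ and $d$) would not match. I expect this to be the only subtle step; everything else is a direct assembly of the preceding lemmas.
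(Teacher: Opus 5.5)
Your proof is correct and follows the paper's argument. Openness comes from Corollary~\ref{cor:boundary_good} and Lemma~\ref{lem:crit}, and compactness of the fiber from Lemma~\ref{lem:proper}. Every fiber point is regular, and the bound $\rank D\ff_y\le\dim P$ rules out solutions when $\dim P<d$. Here the differential is taken on the tangent space of $P$, as you rightly note the paper implicitly intends. When $\dim P=d$, the inverse function theorem (equivalently, the paper's regular value theorem) makes the fiber compact and discrete, hence finite.

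The only minor deviation is in density. You remove the null set $V_\mathrm{crit}$ from the open dense set $V_\mathrm{bd}^c$, whereas the paper argues that all of $V_\mathrm{bad}$ has measure zero. Your variant needs only that $V_\mathrm{bd}$ has empty interior, which is exactly what Corollary~\ref{cor:boundary_good} supplies.
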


\begin{proof}
(i) By Corollary~\ref{cor:boundary_good},
$V_\mathrm{bd}^c$ is open, and by Lemma~\ref{lem:crit},
$V_\mathrm{crit}$ is closed in $V_\mathrm{bd}^c$.
Hence,
\[
V_\mathrm{good}
=
V_\mathrm{bd}^c \setminus V_\mathrm{crit}
\]
is open in $V_\mathrm{bd}^c$ and in $\R^d$.

Moreover, by Lemma~\ref{lem:bv_dim},
$V_\mathrm{bd}$ has Lebesgue measure zero, 
and by Lemma~\ref{lem:crit} again,
also $V_\mathrm{crit}$ has Lebesgue measure zero.
Thus,
\[
V_\mathrm{bad}
=
V_\mathrm{bd} \cup V_\mathrm{crit}
\]
has Lebesgue measure zero. In particular, its complement
$V_\mathrm{good}$ is dense in $\R^d$.

(ii) Let $\gamma \in V_\mathrm{good}$.
On the one hand,
since $\gamma \notin V_\mathrm{bd}$
and $\ff$ is proper on $\ff^{-1}(V_\mathrm{bd}^c)$ by Lemma~\ref{lem:proper},
the fiber $\ff^{-1}(\gamma)$ is compact.
On the other hand,
since $\gamma \notin V_\mathrm{crit}$ is a regular value of $\ff$,
the fiber $\ff^{-1}(\gamma)$ is a smooth submanifold of $P$ of dimension
\[
\dim P-d .
\]

If $\dim P=d$, then $\ff^{-1}(\gamma)$ is a compact, zero-dimensional manifold. 
Hence it is discrete and therefore finite.

If $\dim P<d$, then $\crit(\ff) = P$, since
\[
\rank D\ff_y\le\dim P<d ,
\quad
y\in P.
\]
Hence, $\ff(P) \cap V_{\mathrm{bd}}^c \subseteq V_\mathrm{crit}$.
Since $V_{\mathrm{good}} \subseteq V_{\mathrm{bd}}^c\setminus V_{\mathrm{crit}}$,
we have $V_\mathrm{good} \cap \ff(P) = \emptyset$,
that is,
\[
\ff^{-1}(\gamma)=\emptyset , 
\quad \gamma \in V_\mathrm{good} .
\]
\end{proof}

%

\paragraph{Remark.}

We note that the condition $\dim P \le d$ is sufficient for generic finiteness, but not necessary.
If $\dim P > d$,
then generic finiteness can occur only when the map is everywhere critical. 
In that case, the set of parameters with nonempty fiber has Lebesgue measure zero, 
and the set of parameters with empty fiber is dense; see Section~\ref{sec:under}.


\subsection{Restricted sets of (original) parameters} \label{sec:main_original}

Recall that we are interested in the generic finiteness of
the solution set $Y_c$ for positive parameters
\[
c\in\R^m_{>}.
\]
In applications, we often restrict to a positive parameter set
\[
\mathcal C\subseteq\R^m_{>}.
\]
Moreover, we sometimes distinguish between variable parameters $c_i>0$,
$i\in I_\mathrm{var}$, and fixed parameters $c_i=1$,
$i\in I_\mathrm{fix}$, where
$
I_\mathrm{var} \, \dot\cup \, I_\mathrm{fix}
=
\{1,\ldots,m\}.
$
Then,
\[
\mathcal C
=
\mathcal C_\mathrm{var}
\times
\mathcal C_\mathrm{fix} ,
\]
where
$
\mathcal C_\mathrm{var}
\subseteq
\R_{>}^{I_\mathrm{var}}
$
is a set of variable parameters and
$
\mathcal C_\mathrm{fix}
=
\{1\}^{I_\mathrm{fix}} .
$
We assume that
\[
\emptyset\neq
\mathcal C_\mathrm{var}
\subseteq
\overline{\int_(\mathcal C_\mathrm{var})},
\]
where the interior and closure are taken in
$\R^{I_\mathrm{var}}_>$.
In particular, $\mathcal C_\mathrm{var}$ has full dimension
$|I_\mathrm{var}|$.
This condition includes open or closed full-dimensional boxes,
indeed, full-dimensional semialgebraic regions (without lower-dimensional components).

We define the logarithmic lumping map
\begin{equation}
\Lambda:
\R^m_{>}
\to
\R^d,
\qquad
c
\mapsto
H^\trans\ln c.
\end{equation}
Then
\[
Y_c
=
\left\{
y\in P
\ \middle|\
y^H=c^H
\right\}
=
\left\{
y\in P
\ \middle|\
\ff(y)=\Lambda(c)
\right\}.
\]
That is,
\[
Y_c
=
\mathcal{Y}_\gamma
=
\left\{
y \in P
\; \middle| \;
\ff(y) = \gamma
\right\}
\]
with $\gamma = \Lambda(c)$.
If
$
\mathcal C
=
\mathcal C_\mathrm{var}
\times
\mathcal C_\mathrm{fix} 
$
and $c = (c_\mathrm{var}, c_\mathrm{fix}) \in \mathcal C$,
then
\[
\Lambda(c)
=
H_\mathrm{var}^\trans\ln c_\mathrm{var},
\]
where $H_\mathrm{var}$ is the submatrix of $H$ consisting of the rows
indexed by $I_\mathrm{var}$.
Accordingly, we define
\[
\Lambda_\mathrm{var} \colon
\R_{>}^{I_\mathrm{var}}
\to
\R^d,
\qquad
c_\mathrm{var}
\mapsto
H_\mathrm{var}^\trans\ln c_\mathrm{var}.
\]

The logarithm and exponential maps are diffeomorphisms and locally
bi-Lipschitz. Hence they preserve Lebesgue measure-zero sets.
If
\[
\rank H_\mathrm{var}=d,
\]
then $\Lambda_\mathrm{var}$ is a submersion. In particular, the
preimage of a Lebesgue measure-zero subset of $\R^d$ under
$\Lambda_\mathrm{var}$ has again Lebesgue measure zero.

Now, we can formulate a version of Theorem~\ref{thm:generic_finiteness} for restricted, original parameters.

\begin{theorem}[Generic finiteness for restricted, original parameters] \label{thm:generic_finiteness_original}
For the parametrized system of generalized polynomial equations $A \, (c \circ x^B) = 0$,
let $\dim P \le d$ .

Further, let $\mathcal C = \mathcal C_\mathrm{var} \times \mathcal{C}_\mathrm{fix}$,
where $\emptyset \neq \mathcal C_\mathrm{var} \subseteq \overline{\int_(\mathcal{C}_\mathrm{var})} \subseteq \R_>^{I_\mathrm{var}}$ 
and $\rank H_\mathrm{var} = d$.
Finally, let
\[
\mathcal C_\mathrm{good}
=
\left\{
c \in \mathcal C \; \middle| \;
H_\mathrm{var}^\trans \ln c_\mathrm{var} \in V_\mathrm{good}
\right\}.
\]
Then, 
\begin{enumerate}[(i)]
\item
$\mathcal C_\mathrm{good}$ is 
open and dense in $\mathcal C$.
\item
For every $c\in\mathcal C_\mathrm{good}$, the solution set $Y_c$ is finite (in particular, empty if $\dim P < d$).
\end{enumerate}
\end{theorem}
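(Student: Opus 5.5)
The plan is to pull everything back from Theorem~\ref{thm:generic_finiteness} along the continuous map $c \mapsto \Lambda_\mathrm{var}(c_\mathrm{var}) = H_\mathrm{var}^\trans \ln c_\mathrm{var}$, restricted to $\mathcal C$. Since $\mathcal C_\mathrm{fix}$ is a single point, $\mathcal C$ is homeomorphic to $\mathcal C_\mathrm{var}$ via $c \mapsto c_\mathrm{var}$. It therefore suffices to show that
\[
\mathcal G = \Lambda_\mathrm{var}^{-1}(V_\mathrm{good}) \cap \mathcal C_\mathrm{var}
\]
is open and dense in $\mathcal C_\mathrm{var}$.

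Part (ii) is immediate. For $c \in \mathcal C_\mathrm{good}$ we have $\gamma = \Lambda(c) = \Lambda_\mathrm{var}(c_\mathrm{var}) \in V_\mathrm{good}$. Moreover $Y_c = \mathcal Y_\gamma = \ff^{-1}(\gamma)$, which is finite by Theorem~\ref{thm:generic_finiteness}~(ii), and empty if $\dim P < d$.

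For openness in (i), $V_\mathrm{good}$ is open in $\R^d$ by Theorem~\ref{thm:generic_finiteness}~(i). The map $\Lambda_\mathrm{var}$ is continuous on $\R^{I_\mathrm{var}}_>$, so $\Lambda_\mathrm{var}^{-1}(V_\mathrm{good})$ is open there. Intersecting with $\mathcal C_\mathrm{var}$ gives a relatively open set.

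Density is the only step needing care, because $V_\mathrm{good}$ is merely dense, and preimages of dense sets need not be dense. Instead I will use the measure statement from the proof of Theorem~\ref{thm:generic_finiteness}, namely that $V_\mathrm{bad}$ has Lebesgue measure zero. Since $\rank H_\mathrm{var} = d$, $\Lambda_\mathrm{var}$ is a submersion, as noted before the theorem. Hence $N = \Lambda_\mathrm{var}^{-1}(V_\mathrm{bad})$ has Lebesgue measure zero in $\R^{I_\mathrm{var}}_>$. Concretely, $\ln$ is a diffeomorphism, and the linear surjection $H_\mathrm{var}^\trans$ pulls null sets back to null sets by Fubini after a linear change of coordinates. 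A null set has empty interior, so its complement $\Lambda_\mathrm{var}^{-1}(V_\mathrm{good})$ is dense in $\R^{I_\mathrm{var}}_>$. In particular it meets every nonempty open subset of $\int_(\mathcal C_\mathrm{var})$.

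It remains to use the assumption $\mathcal C_\mathrm{var} \subseteq \overline{\int_(\mathcal C_\mathrm{var})}$. Let $c \in \mathcal C_\mathrm{var}$ and let $U$ be an open neighborhood of $c$. Then $U \cap \int_(\mathcal C_\mathrm{var})$ is a nonempty open set, so it contains a point of $\Lambda_\mathrm{var}^{-1}(V_\mathrm{good})$. That point lies in $\mathcal G \cap U$, so $\mathcal G$ is dense in $\mathcal C_\mathrm{var}$. Transporting back to $\mathcal C$ through the homeomorphism completes (i).

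As a byproduct, $\mathcal C \setminus \mathcal C_\mathrm{good}$ corresponds to a null set in $\R^{I_\mathrm{var}}_>$, so genericity holds in the measure-theoretic sense as well.
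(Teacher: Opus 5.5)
Your proposal is correct and follows essentially the route the paper prepares in the paragraph preceding the theorem (no separate proof is written there): pull back along $\Lambda_\mathrm{var} = H_\mathrm{var}^\trans \ln(\cdot)$, get openness from continuity, and get density because the submersion $\Lambda_\mathrm{var}$ pulls the null set $V_\mathrm{bad}$ back to a null set, combined with the closure-of-interior assumption on $\mathcal C_\mathrm{var}$. Your caveat about preimages of dense sets is overcautious here, since a submersion is an open map and so $\Lambda_\mathrm{var}^{-1}(V_\mathrm{good})$ is automatically dense in $\R^{I_\mathrm{var}}_>$, though your measure argument has the benefit of also giving the null-set form of genericity.
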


\subsection{Application to parametrized power-law dynamical systems} \label{sec:app}

We apply Theorem~\ref{thm:generic_finiteness_original} to parametrized power-law dynamical systems (with {\em real} rather than integer exponents),
for example, arising from reaction networks with (generalized) mass-action kinetics.
We also allow for linear conservation relations. 

Consider the dynamical system for the nonnegative (concentration) vector $x \in \R^n_\ge$,
\[
\dd{x}{t} = N \left( k \circ x^V \right) ,
\]
where $N \in \R^{n \times m}$ is the (stoichiometric) coefficient matrix, $V \in \R^{n \times m}$ is a {\em real} exponent matrix, and $k \in \R^m_>$ is the vector of positive rate constants. 

We are interested in positive steady states, in particular, in positive solutions $x \in \R^n_>$ to 
\begin{equation}
N \left( k \circ x^V \right) = \zeros
\end{equation}
for given positive rate constants $k \in \R^m_>$.

$\bullet$ \quad
If $\rank N = n$, then $N \, ( k \circ x^V \, ) = \zeros$ is already of the form
\[
A \left( c \circ x^B \right) = \zeros
\]
with $A=N$, $B=V$, and $c=k$.
Clearly, $\rank A = n$ and $L = B \, (\ker \cayley) \subseteq \R^n$, for any decomposition into classes. 
Hence, $\dim L \le \rank A$, that is,
\[
\dim P \le d .
\]
The positive parameter set $\mathcal{C} = \R^m_>$ is unrestricted,
all rate constants are variable, 
and a basis matrix $H$ of the monomial-dependency subspace has rank $d$. 
Hence,
Theorem~\ref{thm:generic_finiteness_original} applies:
If $\dim P = d$, then $L=\R^n$, and generically, there are finitely many positive solutions.
If $\dim P < d$, then generically, there are no solutions.

$\bullet$ \quad
If $r = \rank N < n$, the dynamical system has $\cons = n - r$ linear conservation relations, defining forward invariant sets.
In this case, we are interested in positive steady states in a given forward invariant set.

For the moment, we assume that the left kernel of $N$ has a {\em nonnegative} basis matrix $W \in \R^{\cons \times n}_\ge$,
that is, $\im W^\trans = \ker N^\trans$. 
Then, $\dd{}{t} (W x) = \zeros$, that is,
\[
W x = T 
\]
with nonnegative total amounts $T \in \R^\cons_\ge$ that are constant over time.
In other words, every forward-invariant set is defined by a vector of total amounts.

Altogether, we are interested in positive solutions $x \in \R^n_>$ to 
\begin{equation} \label{eq:ss}
N \left( k \circ x^V \right) = \zeros, \quad W x = T
\end{equation}
for positive parameters $k \in \R^m_>$ and necessarily also {\em positive} parameters $T \in \R^\cons_>$.

To bring system \eqref{eq:ss} into the form
\[
A \left( c \circ x^B \right) = \zeros ,
\]
we first rewrite the conservation relations as
\[
\id_\cons \left( T \circ x^{\zeros_{n \times \cons} }\right) 
- W \left( \ones_n \circ x^{\id_n} \right)
= \zeros .
\]
Then, we define a combined coefficient matrix $A \in \R^{(n + \cons) \times (m + \cons + n)}$,
a combined exponent matrix $B \in \R^{n \times (m + \cons + n)}$, 
and a combined parameter vector $c \in \R^{m + \cons + n}_>$,
\[
A = \begin{pmatrix} N & 0 & 0 \\ 0 & \id_\cons & -W \end{pmatrix} , \quad
B = \begin{pmatrix} V & 0_{n \times \cons} & \id_n \end{pmatrix} , \quad
c = \begin{pmatrix} k \\ T \\ \ones_n \end{pmatrix} .
\]
In particular, the parameter vector~$c$ contains {\em constant} parameters $1_n$ arising from the linear term $x = \ones_n \circ x^{\id_n}$ in the conservation relations.

Clearly, $\rank A = r + \cons = n$ and $L = B \, (\ker \cayley) \subseteq \R^n$, for any decomposition into classes. 
Hence, $\dim L \le \rank A$, that is, $\dim P \le d$.
If we choose the trivial decomposition
or the obvious decomposition into $\ell = 2$ classes (steady states and conservation relations), 
then $\dim L = n$ and hence
\[
\dim P = d .
\]
For $\ell=2$, the ``problem dimension'' equals $\dim P = d = m + \cons - 2$, that is, number of variable parameters (rate constants and total amounts) minus two,
which can be larger than~$n$, the dimension of the dynamical system.

It remains to guarantee that the logarithmic lumped parameter space, generated by the {\em variable} parameters, equals $\R^d$.
Let
\[
H = \begin{pmatrix} H_k \\ H_T \\ H_\ones \end{pmatrix} \in \R^{(m+\cons+n) \times d} .
\]
Indeed, we show that
\[
\rank H_\mathrm{var} = \rank \begin{pmatrix} H_k \\ H_T \end{pmatrix} = d .
\]

For $\ell=2$,
we construct a particular basis matrix $H$ of the monomial-dependency subspace $D = \ker B \cap \ker \cayley$.
Recall
\[
B = \begin{pmatrix} V & 0_{n \times \cons} & \id_n \end{pmatrix}
\quad \text{and} \quad
\cayley = \begin{pmatrix} 
\ones^\trans_m & 0 & 0 \\ 
0 & \ones^\trans_\cons & \ones^\trans_n
\end{pmatrix}
,
\]
where the Cayley matrix $\cayley \in \R^{2 \times (m + \cons + n)}$ indicates the $2$ classes.
Explicitly, we choose the following structure:
\[
H = 
\bordermatrix{& \gray{d} \cr
\gray{m} & H_k \cr 
\gray{\cons} & H_T \cr 
\gray{n} & H_\ones
}
=
\bordermatrix{& \gray{m-1} & \gray{\cons-1} \cr
& I_m & 0 \cr
& U & I_\cons \cr
& V' & 0
} ,
\]
that is, a $3 \times 2$ block matrix
with crucial blocks $I_m \in \R^{m \times (m-1)}$ and $I_\cons \in \R^{\cons \times (\cons-1)}$.
In particular, we set
\[
I_i = \begin{pmatrix}
\id_{i-1} \\ - \ones_{i-1}^\trans
\end{pmatrix} .
\]
Then, $V' = -V I_m$ to fulfill $B H = 0$.
To fulfill $\cayley H = 0$,
we have to choose $U$ such that $\ones^\trans_\cons U + \ones^\trans_n V' = 0$,
for example, $U = u_i \ones^\trans_n V I_m$ with any unit vector $u_i \in \R^\cons$.

Obviously, 
\[
\rank \begin{pmatrix} H_k \\ H_T \end{pmatrix}
= m-1 + \cons-1
= d .
\]
Again Theorem~\ref{thm:generic_finiteness_original} applies:
Generically, there are finitely many positive solutions.

\paragraph{Remark \rm(phantom monomials).}
The nonnegative matrix $W$ may contain zero columns. 
In this case, the corresponding monomials (individual variables $x_i$) do not appear in the equation $W x = T$. 
Still, the general framework in~\cite{MuellerRegensburger2023a} allows for this situation.

\paragraph{Remark \rm(nonpositive total amounts).}
The matrix $W$ of linear conservation relations need not be nonnegative.
In this case, the corresponding total amounts $T_i$ may have any sign.
Fix a sign vector $\sigma\in\{\pm1\}^\cons$
and consider the corresponding parameter orthant
\[
\mathcal O_\sigma
=
\left\{
T\in\R^\cons
\mid
\sigma_iT_i>0
\text{ for all }i
\right\}.
\]
Then, the equation $Wx=T$ can be written as
\[
W' x := \diag(\sigma) W x = \diag(\sigma) T =: T' > 0 ,
\]
and the genericity result applies on each parameter orthant. 
Since the coordinate hyperplanes 
\[ 
\bigcup_{i=1}^\cons \{T \in \R^\cons \mid T_i=0\} 
\] 
have Lebesgue measure zero and are closed, 
the final genericity statement holds globally in the space of total amounts.

\bigskip

We summarize the application of Theorem~\ref{thm:generic_finiteness_original} to parametrized power-law dynamical systems.
\begin{corollary}
A parametrized power-law dynamical system $\dd{x}{t} = N \, ( c \circ x^V )$ is given by a (stoichiometric) coefficient matrix $N \in \R^{n \times m}$ and 
a {\em real} exponent matrix $V \in \R^{n \times m}$.
Linear conservation relations $Wx=T$ are given by a basis matrix $W \in \R^{\cons \times n}$,
where $\cons = n - \rank N$.

For generic positive rate constants $k \in \R^m_>$ and generic total amounts $T \in \R^\cons$, 
there are finitely many positive solutions $x \in \R^n_>$ to
\[
N \left( k \circ x^V \right) = \zeros, \quad W x = T .
\]
That is, generically a parametrized power-law dynamical system (with {\em real} exponents) has finitely many positive steady states in a forward-invariant set (``stoichiometric compatibility class'').
\end{corollary}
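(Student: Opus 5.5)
The corollary follows from Theorem~\ref{thm:generic_finiteness_original}, applied orthant by orthant in the space of total amounts. I split into two cases. If $\cons = 0$, i.e.\ $\rank N = n$, then the first bullet of Section~\ref{sec:app} already gives $\dim P \le d$ with $A=N$, $B=V$, $c=k$ and $\mathcal C = \R^m_>$. In that case $I_\mathrm{var}$ is everything and $\rank H_\mathrm{var} = \rank H = d$. The theorem then yields an open dense set of full measure of $k$ with $Y_c$ finite. Since then $\dim P = d$ forces $L = \R^n$ (or $Y_c = \emptyset$ if $\dim P<d$), the relation $X_c = (Y_c/c)^E \circ \e^{L^\perp}$ from \cite[Theorem~5]{MuellerRegensburger2023a} shows that $X_c$ itself is finite.

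If $\cons \ge 1$, I fix a sign vector $\sigma \in \{\pm1\}^\cons$ and replace $W, T$ by $W' = \diag(\sigma) W$ and $T' = \diag(\sigma)T$, so that on $\mathcal O_\sigma$ we have $T'>0$. I then form the combined matrices $A$, $B$, $c=(k,T',\ones_n)$ as in the second bullet, with the $\ell=2$ decomposition. Here $W'$ need not be nonnegative; I only use that the framework in \cite{MuellerRegensburger2023a} allows real coefficient matrices and phantom monomials, as in the remarks. As computed there, $\rank A = n$, hence $\dim L \le \rank A$ and $\dim P \le d$. Moreover $\dim L = n$ for $\ell = 2$, since $\id_n$ appears in $B$ and differences within the second class generate $\R^n$, so $L^\perp = \{0\}$. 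The explicit basis $H$ with blocks $I_m$, $I_\cons$ gives $\rank H_\mathrm{var} = d$ for the variable parameters $(k,T')$. The parameter set $\mathcal C_\mathrm{var} = \R^m_> \times \R^\cons_>$ is open, and the fixed parameters are $\ones_n$. Theorem~\ref{thm:generic_finiteness_original} then gives an open dense, full-measure set $\mathcal C_\mathrm{good}^\sigma$ on which $Y_c$ is finite. Because $L^\perp=\{0\}$, the solution set $X_c$, which is exactly the set of positive solutions of the original system, is finite as well.

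To globalize, I pull each $\mathcal C^\sigma_\mathrm{good}$ back under the linear isomorphism $T \mapsto \diag(\sigma)T$. This gives an open, dense, full-measure subset of $\R^m_> \times \mathcal O_\sigma$. The union over $\sigma$ is open. Its complement is the union of the complements inside each orthant, which are measure zero and closed in their open orthants, together with the closed null set $\R^m_> \times \bigcup_i\{T_i = 0\}$. So the union is dense with null complement in $\R^m_>\times\R^\cons$, which is what ``generic'' means.

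The main obstacle I expect is checking that the hypotheses of the framework hold when $W'$ has mixed signs. In that case $C = \ker A \cap \R^{m+\cons+n}_>$ may be empty, which only makes the statement trivial. Phantom (zero) columns also occur, and I must confirm that the claimed $\dim L = n$ and the rank of $H_\mathrm{var}$ do not depend on the sign pattern of $W'$. I would check this directly: both $B$ and $\cayley$ are independent of $W$, so $D$, $H$ and $L$ are unchanged, and only $P$ changes. The bound $\dim P \le d$ uses only $\rank A = n$, which holds because $W'$ has full row rank $\cons$ and its rows span $\ker N^\trans$.
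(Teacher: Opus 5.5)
Your proposal is correct and follows the paper's own route. It uses the two bullets of Section~\ref{sec:app}: the full-rank case, and the combined system with the $\ell=2$ decomposition and the explicit basis $H$ giving $\rank H_\mathrm{var}=d$. It then applies Theorem~\ref{thm:generic_finiteness_original} and the sign-orthant remark to globalize over $T\in\R^\cons$, with $L=\R^n$ ensuring that finiteness of $Y_c$ transfers to $X_c$. A minor point: $\rank A = r+\cons = n$ follows from the $\id_\cons$ block alone, so you do not need the full row rank of $W'$ for it.
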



\subsection{The case $\dim P > d$} \label{sec:under}

Generic finiteness may hold for $\dim P > d$, but (if and) only if the system is ``everywhere critical''.
We consider this property for general maps.

\begin{lemma}[Everywhere critical maps] \label{lem:critical_everywhere}
Let $\ff \colon P \to \R^d$ be smooth. The following statements are equivalent:
\begin{enumerate}[(i)]
\item $\crit(\ff)=P$ (the map is everywhere critical).
\item $\ff(P)$ has Lebesgue measure zero in $\R^d$.
\item $\ff(P)$ has empty interior in $\R^d$.
\item $\R^d\setminus\ff(P) = \left\{ \gamma \in \R^d \mid \ff^{-1}(\gamma) = \emptyset \right\}$
(the set of parameters with empty fiber)
is dense in~$\R^d$.
\end{enumerate}
\end{lemma}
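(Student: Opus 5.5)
The plan is to prove the cycle (i)$\Rightarrow$(ii)$\Rightarrow$(iii)$\Rightarrow$(iv)$\Rightarrow$(i). Throughout, $P$ is a relatively open polytope, hence a smooth manifold of dimension $\dim P$, and $\crit(\ff)$ is understood as in Section~\ref{sec:critical}, namely the set of $y\in P$ with $\rank D\ff_y<d$.

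First, (i)$\Rightarrow$(ii). If $\crit(\ff)=P$, then every value in $\ff(P)$ is a critical value. Sard's theorem, applied to the smooth map $\ff$ on the manifold $P$ (as in Lemma~\ref{lem:crit}), shows that the set of critical values has Lebesgue measure zero. Hence $\ff(P)$ has measure zero.

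Next come two elementary steps.
\begin{itemize}
\item (ii)$\Rightarrow$(iii): a set with nonempty interior contains an open ball, which has positive measure. So a measure-zero set has empty interior.
\item (iii)$\Leftrightarrow$(iv): the complement of a set is dense if and only if the set has empty interior. Moreover, $\R^d\setminus\ff(P)$ is exactly the set of $\gamma$ with $\ff^{-1}(\gamma)=\emptyset$, by definition of the image.
\end{itemize}

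Finally, (iv)$\Rightarrow$(i), argued by contraposition, which I expect to be the only step needing care. Suppose some $y_0\in P$ has $\rank D\ff_{y_0}=d$. Full rank is an open condition, so the rank stays $d$ on a neighborhood $U$ of $y_0$ in $P$, and $\ff|_U$ is a submersion; this requires $\dim P\ge d$. By the local submersion theorem, or the constant rank theorem in local coordinates on the affine span of $P$, $\ff$ is an open map near $y_0$. In particular $\ff(U)$ contains an open ball around $\ff(y_0)$. Then $\ff(P)$ has nonempty interior, so $\R^d\setminus\ff(P)$ is not dense, contradicting (iv). Hence $\crit(\ff)=P$.

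The main subtlety in this last step is only to apply the submersion theorem correctly. This means working in a chart of the relatively open set $P$, for instance via an affine parametrization of $\aff P$, so that $D\ff_y$ is the derivative of the map restricted to the tangent space of $P$.
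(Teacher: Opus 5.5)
Your proof is correct and follows essentially the same route as the paper. Both use Sard's theorem for (i)$\Rightarrow$(ii), the open-ball argument for (ii)$\Rightarrow$(iii), the complement/density equivalence for (iii)$\Leftrightarrow$(iv), and the submersion theorem at a full-rank point for the converse; the paper phrases that last step as ($\neg$i)$\Rightarrow$($\neg$iii), whereas you route it as (iv)$\Rightarrow$(i) by contraposition through the same nonempty-interior conclusion.
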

\begin{proof}
(i) $\implies$ (ii): 
Let $\crit(\ff)=P$. Then $\ff(P) = \ff(\crit(\ff))$, 
and Sard's theorem implies that $\ff(\crit(\ff))$,
the set of critical values, has Lebesgue measure zero.

(ii) $\implies$ (iii): 
A subset of $\R^d$ with Lebesgue measure zero cannot contain any open ball, and thus must have empty interior.

($\neg$i) $\implies$ ($\neg$iii): 
Suppose there exists $y_0\in P$ with $\rank D \ff_{y_0} = d$.
By the submersion theorem, the image $\ff(U)$ of an open neighborhood $U$ of $y_0$
is open in $\R^d$,
and hence $\ff(P)$ does not have empty interior in $\R^d$.

(iii) $\iff$ (iv): 
A subset of $\R^d$ has empty interior if and only if its complement is dense.
\end{proof}

Now, we consider the case $\dim P > d$ (the ``dimension excess'').

\begin{proposition}[Dimension excess dichotomy] \label{prop:dim_excess}
Let $\dim P>d$.
Then exactly one of the two alternatives holds:
\begin{enumerate}[(i)]
\item
$\crit(\ff)=P$.

In this case,
$\ff(P)$ has Lebesgue measure zero, 
and the set of parameters $\gamma$ with empty fiber $\ff^{-1}(\gamma)$
is dense in~$\R^d$.

\item
There exists $y_0\in P$ with $\rank D\ff_{y_0}=d$.

In this case, 
there exists an open neighborhood $V$ of $\ff(y_0)$ such that,
for every $\gamma \in V$, the fiber
$
\ff^{-1}(\gamma)
$
contains a smooth submanifold of dimension
$
\dim P-d>0.
$
In particular, every such fiber is {\em infinite}.
\end{enumerate}
\end{proposition}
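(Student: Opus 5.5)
The dichotomy itself is immediate: by definition of $\crit(\ff)$, either every $y\in P$ has $\rank D\ff_y<d$, which is alternative (i), or some $y_0\in P$ has $\rank D\ff_{y_0}=d$, which is alternative (ii). These are negations of each other, so exactly one holds. It remains to verify the consequences claimed in each case.

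For alternative (i), I would invoke Lemma~\ref{lem:critical_everywhere} directly. The implication (i)$\implies$(ii) gives that $\ff(P)$ has Lebesgue measure zero. The chain (ii)$\implies$(iii)$\iff$(iv) gives that the set $\R^d\setminus\ff(P)$ of parameters with empty fiber is dense in $\R^d$. No use of $\dim P>d$ is needed here.

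For alternative (ii), I would work locally. Note that $P$ is a relatively open polytope, hence an open subset of its affine hull, so it is a smooth manifold of dimension $\dim P$, and $\ff$ is smooth on it. Since the rank of $D\ff_y$ is lower semicontinuous, there is an open neighborhood $U\subseteq P$ of $y_0$ on which $\rank D\ff_y=d$, so $\ff|_U$ is a submersion. By the local submersion theorem (or the constant rank theorem), after shrinking $U$ we may choose coordinates in which $\ff|_U$ is the projection $\R^{\dim P}\supseteq W'\times W''\to W'$, with $W'\subseteq\R^d$ an open neighborhood of $\ff(y_0)$ and $W''\subseteq\R^{\dim P-d}$ open and nonempty. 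Set $V=W'$. For each $\gamma\in V$, the set $\ff^{-1}(\gamma)\cap U$ corresponds to $\{\gamma\}\times W''$, a nonempty embedded submanifold of dimension $\dim P-d$. Because $\dim P>d$, this dimension is positive, so each such fiber contains a positive-dimensional manifold and is therefore infinite (indeed uncountable).

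The main point to be careful about is the step in alternative (ii). One must ensure that the neighborhood $V$ lies inside the image $\ff(U)$, so that the fiber is nonempty for \emph{every} $\gamma\in V$, and not only a regular submanifold whenever it happens to be nonempty. The projection normal form takes care of this. An alternative route is the implicit function theorem applied to $d$ independent coordinates, parametrizing the fiber by the remaining $\dim P-d$ coordinates in a product neighborhood.
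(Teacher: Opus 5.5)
Your proof is correct and follows essentially the same route as the paper: the dichotomy is the negation of $\crit(\ff)=P$, alternative (i) follows from Lemma~\ref{lem:critical_everywhere}, and alternative (ii) follows from the local submersion theorem. You also make explicit two points the paper leaves implicit: you shrink $U$ so that $\ff|_U$ is a submersion, using lower semicontinuity of the rank, and you use the product normal form to get nonempty fibers over all of $V$, where the paper simply takes $V=\ff(U)$.
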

\begin{proof}
(i) If $\crit(\ff)=P$, the conclusions follow from Lemma~\ref{lem:critical_everywhere}.

(ii) Otherwise ($\crit(\ff) \neq P$), there exists $y_0\in P$ with $\rank D\ff_{y_0}=d$.
By the submersion theorem, $\ff$ maps an open neighborhood $U \subseteq P$ of $y_0$ to an open neighborhood $V := \ff(U) \subset \R^d$ of $\ff(y_0)$. 
Hence, for every parameter $\gamma \in V$, the regular value theorem (applied locally) dictates that the intersection $\ff^{-1}(\gamma) \cap U$ is a smooth submanifold of $U$ with dimension $\dim P - d > 0$. 
Hence, the entire fiber $\ff^{-1}(\gamma)$ is infinite.
\end{proof}


\section{Parametrized generalized polynomial inequalities} \label{sec:inequ}

Finally, we consider parametrized systems of generalized polynomial {\em inequalities}, as introduced in~\cite{MuellerRegensburger2023a},
thereby covering generalized semi-algebraic sets. 

Let $C \subseteq \R^m_>$ be a positive, blunt coefficient cone, $B \in \R^{n \times m}$ be a {\em real} exponent matrix, 
and $c \in \R^m_>$ a positive parameter vector.
We consider the inclusion
\begin{equation}
\left( c \circ x^B \right) \in C
\end{equation}
for the positive variable vector $x \in \R^n_>$.

As a necessary condition for the existence of solutions, we assume $C \neq \emptyset$.
We further assume that $C = C^1 \times \cdots \times C^\ell$,
that is, the coefficient cone has a direct product decomposition, inducing $\ell$ classes of (monomial) indices.
We can always choose the trivial decomposition with $\ell=1$.
To maximally reduce the problem dimension, we can choose the finest decomposition.

\paragraph{Crucial technical objects from Section~\ref{sec:equ}.}
The coefficient set $P = C \cap \Delta$ arises from the intersection with a product $\Delta = \Delta^{m_1-1} \times \cdots \times \Delta^{m_\ell-1}$ of (standard) simplices with $m_1+\ldots+m_\ell=m$.
If $C$ is given by linear equations and (non-)strict inequalities, 
then it is a polyhedral cone (with some faces removed by strict inequalities).

Besides the coefficient set (arising from the coefficient cone $C$),
the monomial-dependency subspace (arising from the exponent matrix $B$) is the most crucial geometric object.
It is given by $D = \ker B \cap \ker \cayley$, where $\cayley \in \{0,1\}^{\ell \times m}$ is a Cayley matrix (having $\ell \times \ell$ blocks, in particular, rows of ones in the diagonal blocks and zeros elsewhere) indicating classes of indices.

\paragraph{Main previous result.}
By \cite[Theorem~5]{MuellerRegensburger2023a}, the solution set 
\begin{equation}
X_c = \{ x \in\R^n_> \mid \left( c \circ x^B \right) \in C \}
\end{equation}
is given by
\[
X_c = (Y_c / c)^E \circ \e^{L^\perp} ,
\]
where
\begin{equation}
Y_c =  \{ y \in P \mid y^H = c^H \}
\end{equation}
is the solution set on the coefficient set, involving a basis matrix $H \in \R^{m \times d}$ of the monomial-dependency subspace $D \subset \R^m$,
that is, $D = \im H$ and $d = \dim D$.
Note that solvability depends on only $d<m$ lumped parameters $c^H$.

\paragraph{Technical objects continued.}
$E \in \R^{n \times m}$ is an exponentiation matrix (explicitly mapping elements of~$Y_c$ to elements of~$X_c$), see~\cite{MuellerRegensburger2023a},
and $L = B \, (\ker \cayley) \subseteq \R^n$ is the monomial difference subspace.
Both $E$ and $L$ do not depend on~$c$.
We note that 
\[
d = m - \ell - \dim L .
\]

If $L \neq \R^n$, then a nonempty solution set~$X_c$ necessarily contains exponential fibers of the form $x \circ \e^{L^\perp}$ and cannot be finite.
Hence, we consider the generic finiteness of the solution set~$Y_c$ (on the coefficient set), 
that is, the generic finiteness of exponential fibers in the (original) solution set~$X_c$.

Note that the dimension of $L$ depends on the decomposition of $C$ we choose. A finer decomposition leads to smaller $\ker \cayley$ and hence may lead to smaller $L = B \, (\ker \cayley)$ and hence larger $L^\perp$. 
That is, the maximal fibers may be immediately visible only for the finest decomposition.
In the trivial decomposition (or other decompositions between trivial and finest), they may be hidden in $Y_c$ which then cannot be finite.

\paragraph{Inequalities via equations.}
There are two natural ways to obtain a generic-finiteness criterion for a parametrized system of generalized polynomial inequalities
from the criterion for equation systems obtained in Section~\ref{sec:main_original}.

$\bullet$ \quad
First, one may consider only the equations defining the linear span of the coefficient cone,
but retain all monomials.
This yields the sufficient condition
\[
\dim P \le d
\]
for the relaxed equation system,
where $P$ is the coefficient set and $d$ is the dimension of the monomial-dependency subspace of the inequality system. 
See Section~\ref{sec:relaxation}.

$\bullet$ \quad
Second, one may consider only the monomials appearing in equations.
This yields the sufficient condition
\[
\dim P^\mathrm{red} \le d^\mathrm{red}
\]
for the reduced equation system.
See Section~\ref{sec:reduction}.

Since every solution of the inequality system is a solution of the relaxed and reduced equation systems, 
generic finiteness for the equation systems implies generic finiteness for the inequality system.

Neither of the two sufficient conditions implies the other in general.
Indeed, removing inequality-only monomials can change both the dimension of the coefficient set and the dependency.

\subsection{Linear-span relaxation} \label{sec:relaxation}

We retain all monomials from the inequality system and hence remain in the ambient space $\R^m_>$ of the positive coefficient cone $C$.
Let
\[
S = \lin C , \quad 
C^\mathrm{rel} = S \cap \R^m_> , \quad \text{and} \quad
P^\mathrm{rel} = C^\mathrm{rel} \cap \Delta .
\]
That is,
$P^\mathrm{rel}$ is the coefficient set obtained by replacing the inequality constraints by the equations defining the linear subspace~$S$.
By construction,
\[
P^\mathrm{rel} \supseteq P , \quad
\aff P^\mathrm{rel} = \aff P , \quad \text{and} \quad
\dim P^\mathrm{rel} = \dim P .
\]

Since the exponent matrix $B$ and the Cayley matrix $\cayley$ are unchanged under this relaxation,
the monomial-dependency subspace
$
D = \ker B \cap \ker \cayley
$
and its dimension
$
d = \dim D 
$
are unchanged as well.

Hence, the generic-finiteness criterion $\dim P^\mathrm{rel} \le d$ (that is, $\dim P \le d$) for equation systems applies to the linear-span relaxation. 
Since
\[
Y_c = \left\{ y \in P \; \middle| \; y^H = c^H \right\}
\subseteq
\left\{ y \in P^\mathrm{rel} \; \middle| \; y^H = c^H \right\} =: Y_c^\mathrm{rel} ,
\]
we can apply Theorem~\ref{thm:generic_finiteness_original}
to guarantee that $Y_c^\mathrm{rel}$ (and hence $Y_c$) is generically finite.

\paragraph{Technical objects.}
As in Section~\ref{sec:main_original}, 
we may restrict the positive parameters $c \in \R^m_>$ to a positive parameter set
$
\mathcal C\subseteq\R^m_{>}.
$
In particular, we allow for variable and fixed (set to one) parameters,
$
\mathcal C
=
\mathcal C_\mathrm{var}
\times
\mathcal C_\mathrm{fix} ,
$
where we assume
\[
\emptyset\neq
\mathcal C_\mathrm{var}
\subseteq
\overline{\int_(\mathcal C_\mathrm{var})}
\]
to ensure full-dimensionality of the set of logarithmic lumped parameters.
Accordingly, we assume $\rank H_\mathrm{var} = d$.

As in Section~\ref{sec:main}, 
we write $V_\mathrm{good}^\mathrm{rel}$ for the set of good logarithmic lumped parameters 
(that are not boundary values or critical values)
of the relaxed equation system, 
and introduce
\[
\mathcal C_\mathrm{good}
=
\left\{
c \in \mathcal C \; \middle| \;
H_\mathrm{var}^\trans \ln c_\mathrm{var} \in V_\mathrm{good}^\mathrm{rel}
\right\}.
\]

\begin{theorem}[Generic finiteness for inequalities via relaxation] \label{thm:generic_finiteness_relaxation}
Consider the parametrized system of generalized polynomial inequalities $(c \circ x^B) \in C$,
and let $\dim P \le d$.

Further, let $\mathcal C = \mathcal C_\mathrm{var} \times \mathcal{C}_\mathrm{fix}$,
where $\emptyset \neq \mathcal C_\mathrm{var} \subseteq \overline{\int_(\mathcal{C}_\mathrm{var})} \subseteq \R_>^{I_\mathrm{var}}$ 
and $\rank H_\mathrm{var} = d$.
Finally, let
$
\mathcal C_\mathrm{good}
=
\{
c \in \mathcal C \mid
H_\mathrm{var}^\trans \ln c_\mathrm{var} \in V_\mathrm{good}^\mathrm{rel}
\} .
$

Then, 
\begin{enumerate}[(i)]
\item
$\mathcal C_\mathrm{good}$ is 
open and dense in $\mathcal C$.
\item
For every $c\in\mathcal C_\mathrm{good}$, the solution set $Y_c$ is finite (in particular, empty if $\dim P < d$).
\end{enumerate}
\end{theorem}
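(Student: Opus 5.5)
The plan is to reduce Theorem~\ref{thm:generic_finiteness_relaxation} to Theorem~\ref{thm:generic_finiteness_original}, applied to the relaxed equation system. First I will show that the relaxation really is an equation system of the form treated in Section~\ref{sec:equ}. Choose a matrix $A^\mathrm{rel}$ with $\ker A^\mathrm{rel} = S = \lin C$. Then $C^\mathrm{rel} = \ker A^\mathrm{rel} \cap \R^m_>$ is a subspace cone. It is nonempty because $C \subseteq C^\mathrm{rel}$.

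I will check that the product decomposition $C = C^1 \times \cdots \times C^\ell$ passes to $S$. Indeed $\lin(C^1\times\cdots\times C^\ell) = \lin C^1 \times \cdots \times \lin C^\ell$, which gives $C^\mathrm{rel} = (\lin C^1 \cap \R^{m_1}_>) \times \cdots \times (\lin C^\ell \cap \R^{m_\ell}_>)$. So the same Cayley matrix $\cayley$, and hence the same $D$, $d$ and $H$, can be used for the relaxed system. This makes the hypotheses $\rank H_\mathrm{var} = d$ and the conditions on $\mathcal C_\mathrm{var}$ identical for both systems.

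Next I will verify the dimension condition. Since $C \subseteq C^\mathrm{rel}$ and $C$ is relatively open in its span, or at least spans $S$, we get $\aff P^\mathrm{rel} = \aff P$. Hence $\dim P^\mathrm{rel} = \dim P \le d$, as recorded in the paragraph before the theorem. A small point to justify is that $\dim P = \dim C - \ell$ holds for a general blunt cone, not only for subspace cones. I would argue this by noting that $C \cap \Delta$ is nonempty and open in $\aff(C)\cap \aff\Delta$ in the relevant sense.

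With these checks, Theorem~\ref{thm:generic_finiteness_original} applied to $A^\mathrm{rel}(c\circ x^B)=0$ gives its part (i), namely that $\mathcal C_\mathrm{good}$, defined via $V_\mathrm{good}^\mathrm{rel}$, is open and dense in $\mathcal C$. It also gives that $Y_c^\mathrm{rel}$ is finite for $c \in \mathcal C_\mathrm{good}$, and empty if $\dim P^\mathrm{rel} < d$. Part (ii) then follows from the inclusion $Y_c \subseteq Y_c^\mathrm{rel}$, which holds because $P \subseteq P^\mathrm{rel}$ and both sets are cut out by the same equation $y^H = c^H$.

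The main obstacle I expect is the legitimacy of the identification in the first step: that $\cite[Theorem~5]{MuellerRegensburger2023a}$ and the objects $P$, $D$, $H$ are the same for $C$ and $C^\mathrm{rel}$ under the chosen decomposition. The decomposition of $C$ might not be the finest one for $S$. That does no harm, since Section~\ref{sec:equ} allows any direct product decomposition. The only thing to confirm is the compatibility of spans with products. Everything else is inherited directly from the equation case.
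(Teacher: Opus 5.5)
Your proposal is correct and follows the paper's argument. The relaxed system has the same $B$ and $\cayley$, hence the same $D$, $d$, $H$, and satisfies $\dim P^\mathrm{rel}=\dim P\le d$; so Theorem~\ref{thm:generic_finiteness_original} applies to it, and $Y_c\subseteq Y_c^\mathrm{rel}$ transfers finiteness (and emptiness when $\dim P<d$) to $Y_c$.

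One small repair: a general blunt cone need not be relatively open in its span, so do not rely on openness to get $\aff P=\aff P^\mathrm{rel}$. Instead, note that each factor $C^i$ is closed under positive scaling. Hence $\lin(C^i\cap\Delta^{m_i-1})=\lin C^i$, and therefore $\aff(C^i\cap\Delta^{m_i-1})=\lin C^i\cap\{y \mid \ones^\trans y=1\}$, which gives $\aff P=\aff P^\mathrm{rel}$.
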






\paragraph{Remark.}
If $\overline P \cap \partial \R^m_> = \emptyset$,
then no logarithmic divergence can occur along the boundary of~$P$. 
In this case, the elaborate boundary-value analysis performed in Section~\ref{sec:boundary} is not required at all,
and it suffices to deal with critical values. 
The generic finiteness conclusion still requires $\dim P\le d$.

\subsection{Dimension reduction} \label{sec:reduction}

The linear-span relaxation removes inequalities, but retains all monomials.
As above, let \[ S = \lin C \subseteq \R^m . \]
A monomial with index $ i\in \{1,\ldots,m\}$ is called {\em phantom} if it does not occur in the equation part. 
Equivalently,
\[
u_i \in S 
\]
for the unit vector $u_i \in \R^m$.
If there are phantom monomials,
they can be discarded when applying the generic finiteness criterion to the equation part.
After reordering, we only consider the remaining monomials with indices $I_\mathrm{red} = \{1,\ldots,m_\mathrm{red}\}$ and $m_\mathrm{red}<m$.
We project $S$ to
\[
S^\mathrm{red} \subseteq \R^{m_\mathrm{red}}
\]
and introduce the reduced coefficient cone and set,
\[
C^\mathrm{red} = S^\mathrm{red} \cap \R^{m_\mathrm{red}}_>  \quad \text{and} \quad
P^\mathrm{red} = C^\mathrm{red} \cap \Delta^\mathrm{red} .
\]
Here, we assume that $C^\mathrm{red} = C^1 \times \cdots \times C^{\ell_\mathrm{red}}$,
that is, the reduced coefficient cone has a direct product decomposition, inducing $\ell_\mathrm{red}$ classes of monomials.
Accordingly,
$\Delta^\mathrm{red} = \Delta^{m_1-1} \times \cdots \times \Delta^{m_{\ell_\mathrm{red}}-1}$ is a product of (standard) simplices with $m_1+\ldots+m_{\ell_\mathrm{red}}=m_\mathrm{red}$.

The exponent matrix $B^\mathrm{red} \in \R^{n \times m_\mathrm{red}}$ is obtained from $B$ by discarding columns defining phantom monomials,
and the monomial-dependency subspace is given by
$
D^\mathrm{red} = \ker B^\mathrm{red} \cap \ker J_{C^\mathrm{red}} ,
$
where $J_{C^\mathrm{red}} \in \{0,1\}^{\ell_\mathrm{red} \times m_\mathrm{red}}$ is the corresponding Cayley matrix.
The dependency
$
d_\mathrm{red} = \dim D^\mathrm{red} 
$
may differ from $d$, just as $\dim P^\mathrm{red}$ may differ from $\dim P$.

Still, the generic-finiteness criterion $\dim P^\mathrm{red} \le d_\mathrm{red}$ for equation systems applies to the dimension reduction.
By construction,
\[
X_c = (Y_c / c)^E \circ \e^{L^\perp} \subseteq (Y_c^\mathrm{red} / c^\mathrm{red})^{E^\mathrm{red}} \circ \e^{(L^\mathrm{red})^\perp} =: X_c^\mathrm{red}
\]
with
\[
Y_c^\mathrm{red} = \left\{ y \in P^\mathrm{red} \; \middle| \; y^{H^\mathrm{red}} = (c^\mathrm{red})^{H^\mathrm{red}} \right\}
\]
and $E^\mathrm{red}$, $L^\mathrm{red}$, and $H^\mathrm{red}$ defined accordingly.
If $L^\mathrm{red} = L$,
we can apply Theorem~\ref{thm:generic_finiteness_original}
to guarantee that $Y_c^\mathrm{red}$ (and hence $Y_c$) is generically finite.
\paragraph{Technical objects.}
As above,
we consider a given set of variable or fixed (set to one) positive parameters
$
\mathcal C =
\mathcal C_\mathrm{var} \times \mathcal C_\mathrm{fix}
\subseteq \R^{I_\mathrm{var} \cup I_\mathrm{fix}}_>
= \R^m_{>} ,
$
where we assume
$
\emptyset \neq \mathcal C_\mathrm{var}
\subseteq \overline{\int_(\mathcal C_\mathrm{var})} .
$
After discarding phantom monomials and reordering,
we project $\mathcal{C}$ to
\[
\mathcal{C}^\mathrm{red} = 
\mathcal{C}_\mathrm{var}^\mathrm{red} \times \mathcal{C}_\mathrm{fix}^\mathrm{red} 
\subseteq \R^{I_\mathrm{var}^\mathrm{red} \cup I_\mathrm{fix}^\mathrm{red}}_> 
= \R^{m_\mathrm{red}}_>
\]
with index sets $I_\mathrm{var}^\mathrm{red} = I_\mathrm{var} \cap I^\mathrm{red}$, 
$I_\mathrm{fix}^\mathrm{red} = I_\mathrm{fix} \cap I^\mathrm{red}$ 
and $I_\mathrm{var}^\mathrm{red} \,\dot\cup\, I_\mathrm{fix}^\mathrm{red} = I^\mathrm{red} = \{1,\ldots,m_\mathrm{red}\}$.
In particular,
\[
\emptyset \neq \mathcal C_\mathrm{var}^\mathrm{red}
\subseteq \overline{\int_(\mathcal C_\mathrm{var}^\mathrm{red})} .
\]
Accordingly, we assume $\rank H_\mathrm{var}^\mathrm{red} = d_\mathrm{red}$.

As above, 
we write $V_\mathrm{good}^\mathrm{red}$ for the set of good logarithmic lumped parameters of the reduced system
and introduce
\[
\mathcal C_\mathrm{good}
=
\left\{
c \in \mathcal C \; \middle| \;
(H_\mathrm{var}^\mathrm{red})^\trans \ln c_\mathrm{var}^\mathrm{red} \in V_\mathrm{good}^\mathrm{red}
\right\}.
\]

\begin{theorem}[Generic finiteness for inequalities via reduction] \label{thm:generic_finiteness_reduction}
For the parametrized system of generalized polynomial inequalities $(c \circ x^B) \in C$,
let $\dim P^\mathrm{red} \le d_\mathrm{red}$ and $L^\mathrm{red} = L$ hold for the reduced equation system,
obtained by discarding phantom monomials.

Further, let $\mathcal C = \mathcal C_\mathrm{var} \times \mathcal{C}_\mathrm{fix}$,
where $\emptyset \neq \mathcal C_\mathrm{var} \subseteq \overline{\int_(\mathcal{C}_\mathrm{var})} \subseteq \R_>^{I_\mathrm{var}}$ 
and $\rank H_\mathrm{var}^\mathrm{red} = d_\mathrm{red}$.
Finally, let
$
\mathcal C_\mathrm{good}
=
\{
c \in \mathcal C \mid
(H_\mathrm{var}^\mathrm{red})^\trans \ln c_\mathrm{var}^\mathrm{red} \in V_\mathrm{good}^\mathrm{red}
\} .
$

Then, 
\begin{enumerate}[(i)]
\item
$\mathcal C_\mathrm{good}$ is 
open and dense in $\mathcal C$.
\item
For every $c\in\mathcal C_\mathrm{good}$, the solution set $Y_c$ is finite (in particular, empty if $\dim P^\mathrm{red} < d_\mathrm{red}$).
\end{enumerate}
\end{theorem}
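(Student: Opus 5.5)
The plan is to reduce the statement to Theorem~\ref{thm:generic_finiteness_original}, applied to the reduced equation system. The reduced data are the coefficient set $P^\mathrm{red} = C^\mathrm{red} \cap \Delta^\mathrm{red}$, the exponent matrix $B^\mathrm{red}$, the dependency subspace $D^\mathrm{red}$ with basis matrix $H^\mathrm{red}$, and the parameter set $\mathcal C^\mathrm{red} = \mathcal C^\mathrm{red}_\mathrm{var} \times \mathcal C^\mathrm{red}_\mathrm{fix}$. These form a parametrized system of generalized polynomial equations $A^\mathrm{red}(c^\mathrm{red} \circ x^{B^\mathrm{red}}) = 0$ with $C^\mathrm{red} = \ker A^\mathrm{red} \cap \R^{m_\mathrm{red}}_>$, where $A^\mathrm{red}$ encodes $S^\mathrm{red}$. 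The hypotheses of Theorem~\ref{thm:generic_finiteness_original} hold: $\dim P^\mathrm{red} \le d_\mathrm{red}$, $\rank H^\mathrm{red}_\mathrm{var} = d_\mathrm{red}$, and $\emptyset \neq \mathcal C^\mathrm{red}_\mathrm{var} \subseteq \overline{\int_(\mathcal C^\mathrm{red}_\mathrm{var})}$. The last condition must be checked: $\mathcal C^\mathrm{red}_\mathrm{var}$ is a coordinate projection of $\mathcal C_\mathrm{var}$, projections are open maps, and they commute with closure up to inclusion. Hence $\int_(\mathcal C_\mathrm{var})$ projects into $\int_(\mathcal C^\mathrm{red}_\mathrm{var})$, and density is preserved. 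So Theorem~\ref{thm:generic_finiteness_original} gives an open dense set $\mathcal C^\mathrm{red}_\mathrm{good} \subseteq \mathcal C^\mathrm{red}$ on which $Y^\mathrm{red}_{c}$ is finite, and empty if $\dim P^\mathrm{red} < d_\mathrm{red}$.

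For (i), write $\mathcal C_\mathrm{good} = \rho^{-1}(\mathcal C^\mathrm{red}_\mathrm{good}) \cap \mathcal C$, where $\rho$ is the coordinate projection discarding phantom indices. Since $\mathcal C = \mathcal C_\mathrm{var} \times \{1\}^{I_\mathrm{fix}}$ and $\rho$ splits as a product of the projection onto the retained coordinates and the discarded coordinates, $\mathcal C \cong \mathcal C^\mathrm{red} \times (\text{discarded factor})$. Openness of $\mathcal C_\mathrm{good}$ in $\mathcal C$ then follows from continuity of $\rho$. Density follows because the preimage of a dense set under a projection of a product onto one factor is dense. Here I rely on $\mathcal C_\mathrm{var}$ splitting accordingly. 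If it does not, I would instead argue directly: the map $c \mapsto (H^\mathrm{red}_\mathrm{var})^\trans \ln c^\mathrm{red}_\mathrm{var}$ is a submersion on $\R^{I_\mathrm{var}}_>$. The preimage of the closed null set $V^\mathrm{red}_\mathrm{bad}$ is then closed and null, so its complement is open and dense in $\int_(\mathcal C_\mathrm{var})$, hence dense in $\mathcal C_\mathrm{var}$ by the closure assumption. This is the same argument presumably used for Theorem~\ref{thm:generic_finiteness_original}.

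For (ii), I take $c \in \mathcal C_\mathrm{good}$ and use the inclusion $X_c \subseteq X^\mathrm{red}_c = (Y^\mathrm{red}_c / c^\mathrm{red})^{E^\mathrm{red}} \circ \e^{(L^\mathrm{red})^\perp}$, which holds because every solution of the inequality system satisfies the reduced equations. Since $Y^\mathrm{red}_c$ is finite, $X^\mathrm{red}_c$ is a finite union of exponential fibers $x \circ \e^{(L^\mathrm{red})^\perp}$. The hypothesis $L^\mathrm{red} = L$ makes these exactly $L^\perp$-fibers. $X_c$ is a union of $L^\perp$-fibers, and each $y \in Y_c$ corresponds bijectively to one fiber of $X_c$ via $y \mapsto (y/c)^E \circ \e^{L^\perp}$, by \cite[Theorem~5]{MuellerRegensburger2023a}. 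Hence $Y_c$ injects into the finite set of fibers of $X^\mathrm{red}_c$, so it is finite, and it is empty when $Y^\mathrm{red}_c$ is empty.

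The main obstacle is this last step: rigorously justifying that $Y_c$ injects into the fiber set. One needs that distinct $y \in P$ give distinct $L^\perp$-fibers, which requires the parametrization of \cite{MuellerRegensburger2023a} to be injective modulo $\e^{L^\perp}$. If that is not available in that form, I would instead map $x \mapsto c \circ x^B$ normalized classwise. This recovers $y$ from $x$, so fibers determine $y$, and this map is constant on $L^\perp$-fibers.
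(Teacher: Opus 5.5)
Your proposal is correct and follows the paper's route. The paper likewise applies Theorem~\ref{thm:generic_finiteness_original} to the reduced equation system and passes to $Y_c$ via the inclusion $X_c \subseteq X_c^\mathrm{red}$ together with $L^\mathrm{red} = L$. Your added details fill in steps the paper leaves implicit: density of $\mathcal C_\mathrm{good}$ in $\mathcal C$ via the submersion $c \mapsto (H^\mathrm{red}_\mathrm{var})^\trans \ln c^\mathrm{red}_\mathrm{var}$, and the bijection between $Y_c$ and the $L^\perp$-fibers of $X_c$ via classwise normalization of $c \circ x^B$.
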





%


\subsection{Parametrized generalized semi-algebraic sets} \label{sec:semi}

We define a parametrized generalized semi-algebraic set 
by a finite Boolean combination of parametrized generalized polynomial equations and inequalities. 
After rewriting negated relations and passing to disjunctive normal form, 
every such set is a finite union of finite intersections of generalized polynomial equations and (strict or non-strict) inequalities. 
We show that it is a parametrized system of generalized polynomial inequalities in the sense of \cite{MuellerRegensburger2023a}.

We define a {\em basic} parametrized generalized semi-algebraic set
by a finite conjunction of (strict or nonstrict) inequalities and equations for positive variables $x \in \R^n_>$, 
\[ 
a^\trans \left( c \circ x^B \right) \ge 0 , \quad
a^\trans \left( c \circ x^B \right) > 0 , \quad \text{or} \quad
a^\trans \left( c \circ x^B \right) = 0 ,
\]
where $a \in \R^m$ is a real coefficient vector, 
$B \in \R^{n \times m}$ is a {\em real} exponent matrix, 
and $c \in \R^m_>$ is a positive parameter vector. 

We define a {\em general} parametrized generalized semi-algebraic set
as a finite disjunction of basic ones.
After collecting all monomial terms occurring in the Boolean combination, 
we may use one global real exponent matrix $B \in \R^{n\times m}$ and one global positive parameter vector $c \in \R^m_>$.

For the explicit definition, 
let $x \in \R^n_>$ be a positive variable vector, 
$B \in \R^{n \times m}$ be a {\em real} exponent matrix, 
and $c \in \R^m_>$ be a positive parameter vector.
Together, they globally define the positive vector of monomial terms $c \circ x^B$.

Now, let $\mathcal{J} = \{1, \dots, n_\ast \}$ be a finite index set representing disjunctions (of basic systems), 
and for each $j \in \mathcal{J}$, let $\mathcal{K}_j = \{1, \dots, n_j\}$ be a finite index set representing conjunctions (within a basic system). 
For $j \in \mathcal{J}$ and $k \in \mathcal{K}_j$, let $a_{jk} \in \R^m$ be a real coefficient vector
and $\bowtie_{jk} \in \{ \ge, >, = \}$ be a relation.

A parametrized generalized semi-algebraic set is given by
\begin{equation} \label{eq:Sc}
S_c = \bigcup_{j \in \mathcal{J}} \, \bigcap_{k \in \mathcal{K}_j} \left\{ x \in \R^n_> \mid a_{jk}^\trans \left( c \circ x^B \right) \bowtie_{jk} 0 \right\} .
\end{equation}

The corresponding positive, blunt coefficient cone is given by
\[
C = \bigcup_{j \in \mathcal{J}} \, \bigcap_{k \in \mathcal{K}_j} \left\{ y \in \R^m_> \mid a_{jk}^\trans \, y \bowtie_{jk} 0 \right\} .
\]
Every inequality or equation
$
a_{jk}^\trans y\bowtie_{jk}0
$
for
$
j\in\mathcal J
$
and
$
k\in\mathcal K_j,
$
as well as every positivity condition
$
y_i>0
$
for
$
i\in\{1,\ldots,m\},
$
defines a closed half-space, an open half-space, or a linear subspace
of \(\R^m\), according to
$
\bowtie_{jk}\in\{\ge,>,=\}.
$
Every intersection indexed by $j$ defines a partially open polyhedral cone. 
Their not necessarily disjoint union defines a positive, blunt cone that may have several nonconvex components.

Clearly, the parametrized generalized semi-algebraic set in~\eqref{eq:Sc} is given by
\[
S_c = \{ x \in \R^n_> \mid \left( c \circ x^B \right) \in C \} ,
\]
that is, a parametrized generalized semi-algebraic set (as defined above)
is a parametrized system of generalized polynomial inequalities as defined by M\"uller and Regensburger~\cite{MuellerRegensburger2023a}.


\subsubsection*{Acknowledgements}

This research was funded in whole, or in part, by the Austrian Science Fund (FWF), 
grant DOI 10.55776/PAT3748324 to SM.

%
%


\clearpage

\addcontentsline{toc}{section}{References}
\bibliographystyle{abbrv}

\bibliography{MR_overleaf,old,finite}

\begin{thebibliography}{10}

\bibitem{banajicraciun2010}
M.~Banaji and G.~Craciun.
\newblock Graph-theoretic criteria for injectivity and unique equilibria in
  general chemical reaction systems.
\newblock {\em Adv. in Appl. Math.}, 44(2):168--184, 2010.

\bibitem{banaji2026positive}
M.~Banaji and E.~Feliu.
\newblock Positive equilibria in mass action networks: geometry and bounds.
\newblock {\em Journal of Mathematical Biology}, 93(1):article no. 37, 2026.

\bibitem{BanajiPantea2016}
M.~Banaji and C.~Pantea.
\newblock Some results on injectivity and multistationarity in chemical
  reaction networks.
\newblock {\em SIAM J. Appl. Dyn. Syst.}, 15(2):807--869, 2016.

\bibitem{bernshtein1975}
D.~N. Bernshtein.
\newblock The number of roots of a system of equations.
\newblock {\em Functional Analysis and Its Applications}, 9(3):183--185, 1975.

\bibitem{bihan2007gale}
F.~Bihan and F.~Sottile.
\newblock New fewnomial upper bounds from {G}ale dual polynomial systems.
\newblock {\em Moscow Mathematical Journal}, 7(3):387--407, 2007.

\bibitem{coste2000}
M.~Coste.
\newblock {\em An Introduction to O-minimal Geometry}.
\newblock Lecture Notes. Universit{\`a} di {P}isa, Pisa, 2000.

\bibitem{craciunfeinberg2005i}
G.~Craciun and M.~Feinberg.
\newblock Multiple equilibria in complex chemical reaction networks. {I}. {T}he
  injectivity property.
\newblock {\em SIAM J. Appl. Math.}, 65(5):1526--1546, 2005.

\bibitem{Feinberg1972}
M.~Feinberg.
\newblock Complex balancing in general kinetic systems.
\newblock {\em Arch. Rational Mech. Anal.}, 49:187--194, 1972.

\bibitem{feliu2025generic}
E.~Feliu, O.~Henriksson, and B.~Pascual-Escudero.
\newblock Generic consistency and nondegeneracy of vertically parametrized
  systems.
\newblock {\em Journal of Algebra}, 677:630--666, 2025.

\bibitem{feliu2026generic}
E.~Feliu, O.~Henriksson, and B.~Pascual-Escudero.
\newblock The generic geometry of steady state varieties.
\newblock {\em SIAM Journal on Applied Algebra and Geometry}, 10(3):519--548,
  2026.

\bibitem{FeliuMuellerRegensburger2019}
E.~Feliu, S.~M{\"u}ller, and G.~Regensburger.
\newblock Characterizing injectivity of classes of maps via classes of
  matrices.
\newblock {\em Linear Algebra and its Applications}, 580:236--261, 2019.

\bibitem{feliuwiuf2013}
E.~Feliu and C.~Wiuf.
\newblock Injectivity, multiple zeros, and multistationarity in reaction
  networks.
\newblock {\em SIAM Journal on Applied Dynamical Systems}, 12(4):1685--1721,
  2013.

\bibitem{guilleminpollack1974}
V.~Guillemin and A.~Pollack.
\newblock {\em Differential Topology}.
\newblock Prentice-Hall, Englewood Cliffs, NJ, 1974.

\bibitem{Horn1972}
F.~Horn.
\newblock Necessary and sufficient conditions for complex balancing in chemical
  kinetics.
\newblock {\em Arch. Rational Mech. Anal.}, 49:172--186, 1972.

\bibitem{HornJackson1972}
F.~Horn and R.~Jackson.
\newblock General mass action kinetics.
\newblock {\em Arch. Rational Mech. Anal.}, 47:81--116, 1972.

\bibitem{khovanskii1991}
A.~G. Khovanskii.
\newblock {\em Fewnomials}, volume~88 of {\em Translations of Mathematical
  Monographs}.
\newblock American Mathematical Society, Providence, RI, 1991.

\bibitem{kushnirenko1976}
A.~G. Kushnirenko.
\newblock Newton polytopes and the {B}ezout theorem.
\newblock {\em Functional Analysis and Its Applications}, 10(3):233--235, 1976.

\bibitem{lee2013}
J.~M. Lee.
\newblock {\em Introduction to Smooth Manifolds}, volume 218 of {\em Graduate
  Texts in Mathematics}.
\newblock Springer, New York, 2 edition, 2013.

\bibitem{Mueller2016}
S.~M{\"u}ller, E.~Feliu, G.~Regensburger, C.~Conradi, A.~Shiu, and
  A.~Dickenstein.
\newblock Sign conditions for injectivity of generalized polynomial maps with
  applications to chemical reaction networks and real algebraic geometry.
\newblock {\em Found. Comput. Math.}, 16(1):69--97, 2016.

\bibitem{MuellerRegensburger2012}
S.~M{\"u}ller and G.~Regensburger.
\newblock Generalized mass action systems: {C}omplex balancing equilibria and
  sign vectors of the stoichiometric and kinetic-order subspaces.
\newblock {\em SIAM J. Appl. Math.}, 72(6):1926--1947, 2012.

\bibitem{MuellerRegensburger2014}
S.~M{\"u}ller and G.~Regensburger.
\newblock Generalized mass-action systems and positive solutions of polynomial
  equations with real and symbolic exponents.
\newblock In {\em Computer Algebra in Scientific Computing -- CASC 2014},
  volume 8660 of {\em Lecture Notes in Comput. Sci.}, pages 302--323,
  Berlin/Heidelberg, 2014. Springer.

\bibitem{MuellerRegensburger2016}
S.~M{\"u}ller and G.~Regensburger.
\newblock Elementary vectors and conformal sums in polyhedral geometry and
  their relevance for metabolic pathway analysis.
\newblock {\em Front. Genet.}, 7(90):1--11, 2016.

\bibitem{MuellerRegensburger2023a}
S.~M{\"u}ller and G.~Regensburger.
\newblock Parametrized systems of generalized polynomial inequalities via
  linear algebra and convex geometry.
\newblock {\em Positivity}, 30:4, 2026.

\bibitem{vandendries1998}
L.~van~den Dries.
\newblock {\em Tame Topology and O-minimal Structures}, volume 248 of {\em
  London Mathematical Society Lecture Note Series}.
\newblock Cambridge University Press, Cambridge, 1998.

\bibitem{vandendriesmiller1996}
L.~van~den Dries and C.~Miller.
\newblock Geometric categories and o-minimal structures.
\newblock {\em Duke Mathematical Journal}, 84(2):497--540, 1996.

\bibitem{wilkie1996}
A.~J. Wilkie.
\newblock Model completeness results for expansions of the ordered field of
  real numbers by restricted {P}faffian functions and the exponential function.
\newblock {\em Journal of the American Mathematical Society}, 9(4):1051--1094,
  1996.

\end{thebibliography}

\end{document}